\newcommand{\comments}[1]{}
 \newlength{\baseunit}               
\newtheorem{remk}{Remark}
\newtheorem{thm}{Theorem}
\newtheorem{lem}{Lemma}
\newtheorem{prop}{Proposition}
\newcommand{\oleq}[1]{\overset{ {\scriptscriptstyle #1}}{\leq}}
\def \N {\mathbb N}
\def \cF {\mathcal F}
\def \cA {\mathcal A}
\def \c\nu {\mathcal \nu}
\def \cW {\mathcal W}
\def \cH {\mathcal H}
\def \cI {\mathcal I}
\def \cJ {\mathcal J}
\def \cH {\mathcal H}
\def \o\nu {{\overline{\nu}}}
\def \h\nu {{\widehat{\nu}}}
\def \up\nu {\nu^{\up}}
\def \path {\mbox{\textbf{path}}}
\def \up {{\text{\tiny up}}}
\def \low {{\text{\tiny low}}}
\def \med {{\text{\tiny med}}}
\def \high {{\text{\tiny high}}}
\newcommand{\dist}{\operatorname{dist}}
\newcommand{\supp}{\operatorname{supp}}
\newcommand{\spa}{\operatorname{span}}
\newcommand{\diam}{\operatorname{diam}}
\newcommand{\bump}{a}
\newcommand{\Z}{\mathbb{Z}}
\newcommand{\C}{\mathbb{C}}
\newcommand{\R}{\mathbb{R}}
\numberwithin{equation}{section}
\numberwithin{thm}{section}
\numberwithin{defn}{section}
\numberwithin{lem}{section}
\numberwithin{cor}{section}
\numberwithin{prop}{section}
\numberwithin{alg}{section}
\begin{document}
\pagestyle{plain}
\title{The Eigenvalue Distribution Of Time-Frequency Localization Operators}

\author{Arie Israel}
\address{Mathematics Department, University of Texas at Austin.}

\maketitle

\begin{abstract}
We estimate the distribution of the eigenvalues of a family of time-frequency localization operators whose eigenfunctions are the well-known Prolate  Spheroidal Wave Functions from mathematical physics. These operators are fundamental to the theory of bandlimited functions and have applications in signal processing. Unlike previous approaches which rely on complicated formulas for the eigenvalues, our approach is simple: We build an orthonormal basis of modulated bump functions (known as wave packets in time-frequency analysis) which approximately diagonalizes the operator of interest.
\comments{
Time-frequency localization operators (TFLOs) are of interest as they model the frequency-and time-limited nature of many real-world signals, and the behavior of their eigenvalues is fundamental to their understanding.  Theoretical results in the literature have been largely limited to the one-dimensional setting. Here we extend one of the principal results on the behavior of TFLOs to higher dimensions: they have a number of `large' eigenvalues, then a small number of intermediate eigenvalues, and the remaining eigenvalues are near zero. The number of large eigenvalues, termed the numerical rank, is proportional to the product of the Lebesgue measures of the time and frequency intervals.
}
\end{abstract}

\section{Introduction}
\label{sec0}

This paper concerns the distribution of eigenvalues of \emph{time-frequency localization operators} (TFLOs) of the form
\begin{equation}\label{tflo}
T_{I, J} f = R_I P_J R_I  f
\end{equation}
where $I$, $J$ are compact intervals, and $R_I : L^2(\R) \rightarrow L^2(\R)$, $P_J : L^2(\R) \rightarrow L^2(\R)$ are associated projection operators in the time and frequency variables, i.e.
\begin{align}\label{tflo_a}
&(R_I f)(x) = \left\{ 
\begin{array}{ll}
 f(x) &  x \in I  \\
 0 & x \in \R \setminus I,
\end{array}
\right.\\
\label{tflo_b}
&(P_J f) (x) = \int_{J} \int_{\R} f(y) e^{- 2 \pi i \omega \cdot (y - x)} dy d \omega.
\end{align}

The eigenfunctions of $T_{I,J}$ are the restrictions to $I$ of Prolate Spheroidal Wave Functions (PSWFs),  which arose first in the study of the Helmholtz equation in mathematical physics \cite{St}. 
Due to the connection with TFLOs, the PSWFs provide an optimal basis for the representation of bandlimited functions on an interval, as observed in Landau-Pollack \cite{LP1,LP2}, Slepian \cite{S1,S2}, and Slepian-Pollack \cite{SP1}. More recently, PSWFs have been applied to produce quadrature formulas \cite{OR1} and interpolation schemes \cite{OR2} for computing with bandlimited functions.

As demonstrated in \cite{L1}, the eigenvalues of $T_{I,J}$ display a concentration phenomenon in the asymptotic limit as the parameter $\Lambda:= \lvert I \rvert \cdot \lvert J \rvert$ tends to infinity: For $\epsilon > 0$, there are approximately $\Lambda$ eigenvalues in the interval $[1,1-\epsilon]$, approximately $C \log (\Lambda) \cdot \log(1/\epsilon)$ eigenvalues in the interval $( \epsilon, 1 - \epsilon )$, and the remaining eigenvalues form a sequence tending to zero at an exponential rate. This estimate is asymptotic, meaning that it is guaranteed only for some sufficiently large $\Lambda$; no quantitative bounds were known for any finite $\Lambda$ until recently: A quantitative upper bound on the eigenvalue sequence was proven in Osipov \cite{O1}.

This paper's aim is to present an alternate method for estimating the eigenvalues of a TFLO  using techniques from time-frequency analysis. We will prove quantitative upper and lower bounds by giving an example of a basis of time-frequency wave packets (the local-cosine basis) that approximately diagonalizes the TFLO.
Our results are sub-optimal by a single factor of $\log(1/\epsilon)$. 
We hope that the methods given here can be used to study localization operators associated to domains in higher-dimensions. We leave this investigation for a future work. 

\subsection{Statement of main results}\label{results_sec}

We fix intervals $J = [-1/2,1/2]$ and $I = [-D/2,D/2]$, where $D \geq 2$.

The operator $T = T_{I,J}$ defined in \eqref{tflo} is compact and positive-semidefinite, and its $L^2$ operator norm is at most $1$. We write $L^2(\R)$ as the direct sum $\mathcal{H} \oplus \ker (T)$, where $\cH = \ker(T)^\perp$; clearly, $L^2(\R \setminus I)$ is a subspace of $\ker(T)$. Thus, we may regard $T$ as an operator on $L^2(I)$. The spectral theorem for compact operators implies that the spectrum of $T$ is discrete and the positive eigenvalues of $T$ form a non-increasing sequence $\{ \lambda_k\}$ in $(0,1]$, with $\lambda_k \rightarrow 0$ as $k \rightarrow \infty$. Furthermore, there exists an orthonormal basis $\{ \psi_k \}_{k \in \N}$ for the subspace $\cH = \ker(T)^\perp$ of $L^2(I)$ satisfying the eigenvalue equation $T \psi_k =\lambda_k \psi_k $ for all $k \in \N$.

The position of the eigenvalue of $T$ closest to $\lambda = 1/2$ is described in the next result from \cite{L2}. 

\begin{thm}\label{thm0}
We have
\begin{equation}
\label{pos_mid}
\lambda_{[D] - 1} \leq \frac{1}{2} \leq \lambda_{[D]}.
\end{equation}
Here, we write $[x]$ to denote the integer part of a real number $x$.
\end{thm}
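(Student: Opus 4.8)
The plan is to route everything through the Courant--Fischer min--max description of the eigenvalues of $T$, viewed as a compact self-adjoint operator on $L^2(I)=\cH$, together with two exact identities. The first is the quadratic-form identity
\[
\langle Tf,f\rangle \;=\; \langle R_I P_J R_I f, f\rangle \;=\; \langle P_J f, f\rangle \;=\; \|P_J f\|_{L^2}^2 \qquad (f\in L^2(I)),
\]
immediate from $R_I f = f$ and $P_J = P_J^{\ast} = P_J^2$. The second is the trace identity: since $T$ has integral kernel $\mathbf{1}_I(x)\,\frac{\sin\pi(x-y)}{\pi(x-y)}\,\mathbf{1}_I(y)$, whose diagonal is identically $1$, we get $\sum_k\lambda_k = \operatorname{tr}(T) = |I| = D$. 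With these in hand, $\tfrac12\le\lambda_{[D]}$ amounts to exhibiting a $[D]$-dimensional subspace $V\subseteq L^2(I)$ on which $\|P_J f\|_{L^2}^2\ge\tfrac12\|f\|_{L^2}^2$, and $\lambda_{[D]-1}\le\tfrac12$ amounts, via the complementary operator $\operatorname{Id}_{L^2(I)} - T = R_I P_{J^c} R_I|_{L^2(I)}$ with $P_{J^c} = \operatorname{Id} - P_J$, to exhibiting a subspace of codimension at most $[D]-2$ in $L^2(I)$ on which $\|P_{J^c}f\|_{L^2}^2\ge\tfrac12\|f\|_{L^2}^2$.

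For the lower bound I would construct $V$ from trial functions drawn from the Paley--Wiener space of functions band-limited to $J$: choose an orthonormal system $\phi_1,\dots,\phi_{[D]}$ of such functions that, as a family, is well concentrated in the time window $I$ --- a time--frequency-translated wave-packet family in the spirit of the constructions used elsewhere in this paper, or a windowed-exponential system adapted to $I$ --- and set $f_n := R_I\phi_n$. Since each $\phi_n$ lives essentially on $I$, the Gram matrix $(\langle f_n, f_m\rangle)$ differs from the identity by a small error; and since $P_J f_n = \phi_n - P_J(R_{I^c}\phi_n)$ with the correction term small, the matrix $(\langle Tf_n, f_m\rangle) = (\langle P_J f_n, P_J f_m\rangle)$ is likewise close to the identity. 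A perturbation estimate then gives $\langle Tf,f\rangle\ge\tfrac12\|f\|_{L^2}^2$ on $V := \operatorname{span}\{f_n\}$, hence $\lambda_{[D]}\ge\tfrac12$. The upper bound is the dual statement: using the frequency tiling $P_{J^c} = \sum_{m\ne 0} M_m P_J M_{-m}$, where $M_m$ is modulation by $e^{2\pi i m x}$ (which commutes with $R_I$), the same scheme applied to the modulated family $e^{2\pi i x}\phi_n$ produces $[D]-1$ nearly orthonormal functions supported in $I$ whose frequency content lies essentially in $J^c$, i.e.\ $[D]-1$ eigenvalues of $\operatorname{Id}-T$ above $\tfrac12$, equivalently at most $[D]-2$ eigenvalues of $T$ above $\tfrac12$, which is $\lambda_{[D]-1}\le\tfrac12$.

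All of the above is soft; the real difficulty --- and the reason this is a theorem of \cite{L2} rather than a one-line consequence of min--max --- is getting the counts to be \emph{exactly} $[D]$ and $[D]-1$, rather than $[D]\pm O(\log D)$. A careless choice of the system $\{\phi_n\}$ loses a logarithm, because $P_J$ is convolution with the $\operatorname{sinc}$ kernel and its $\ell^1$-divergent tails force the Gram perturbations above to be only $O(D^{-1}\log D)$ on average, which is not small in operator norm on a $[D]$-dimensional span; symmetrically, the trace bounds $\operatorname{tr}(T) = D$ and $\operatorname{tr}(T^2) = D - O(\log D)$ pin the eigenvalue transition only to a window of width $O(\log D)$ about index $D$ --- precisely the $C\log\Lambda\,\log(1/\epsilon)$ ``plunge region'' mentioned in the introduction. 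Removing this slack at the single value $\lambda=\tfrac12$ is what needs real work: one must use the precise structure of the $\operatorname{sinc}$ kernel --- equivalently, of the Wiener--Hopf/Szeg\H{o} symbol $\mathbf{1}_J$, whose logarithmic (second-order) correction coefficient is a multiple of $\log\frac{\lambda}{1-\lambda}$ and therefore vanishes at $\lambda=\tfrac12$ --- together with the reflection symmetry of $I$ and $J$: $T$ commutes with $f(x)\mapsto f(-x)$, its eigenfunctions are the PSWFs, which alternate in parity with strictly decreasing eigenvalues by Slepian's commuting differential operator, and this yields the auxiliary identity $\sum_k(-1)^k\lambda_k = \tfrac1\pi\operatorname{Si}(\pi D)$, clustering at $\tfrac12$. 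Playing this off against $\sum_k\lambda_k = D$ is what ultimately fixes the crossing at index $[D]$; this is the delicate bookkeeping carried out in \cite{L2}, and everything else above is just the scaffolding around it.
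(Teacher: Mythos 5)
The paper does not prove Theorem \ref{thm0}; it is stated as a result imported from Landau \cite{L2} (``the next result from \cite{L2}'') with no argument given, so there is no in-paper proof to compare against. Your write-up is, by your own admission, scaffolding rather than a proof --- your final paragraph explicitly defers the decisive step back to \cite{L2} --- so in terms of logical completeness you and the paper end up in the same place. But there is a red flag you should have caught before building the scaffold: since $\{\lambda_k\}$ is \emph{non-increasing}, the printed chain $\lambda_{[D]-1}\le\tfrac12\le\lambda_{[D]}$ is impossible unless $\lambda_{[D]-1}=\lambda_{[D]}=\tfrac12$ exactly, which is false in general. The intended inequality, and the one actually used later in Section \ref{sec4} to place the spectral transition near index $D$, is $\lambda_{[D]}\le\tfrac12\le\lambda_{[D]-1}$. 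Your min--max translation tracks the printed typo faithfully --- you aim to prove both ``$\lambda_{[D]}\ge\tfrac12$'' and ``$\lambda_{[D]-1]}\le\tfrac12$'' is what you write --- so if carried out it would establish an exact equality that does not hold; a one-line sanity check against the ordering of the $\lambda_k$ would have caught this.

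On the content of the sketch: the identities you invoke are correct --- $\langle Tf,f\rangle=\|P_Jf\|^2$ for $f\in L^2(I)$, $\operatorname{tr}(T)=|I|=D$, and the parity split giving $\operatorname{tr}(T^+)-\operatorname{tr}(T^-)=\int_I\operatorname{sinc}(2x)\,dx=\tfrac1\pi\operatorname{Si}(\pi D)$, which by Slepian's alternating-parity theorem equals $\sum_k(-1)^k\lambda_k$. These are genuine and reasonable things to want. But the closing claim --- that playing the alternating trace against $\operatorname{tr}(T)=D$ ``fixes the crossing at index $[D]$'' --- is not an argument; it is a guess about where the difficulty lives, unaccompanied by any computation that actually pins the transition index, and it is not clear this is what \cite{L2} does. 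Meanwhile the earlier part of the sketch (band-limited trial functions, Gram perturbation) is precisely the route you then, correctly, diagnose as losing a logarithm. So the sketch contains one real error (the flipped inequality, inherited from the statement) and ends in an unsubstantiated heuristic; the honest verdict is that the paper cites, you sketch and cite, and the theorem is not actually proved in either place.
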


Our main result bounds the number of eigenvalues contained in an interval centered about $\lambda = 1/2$.

\begin{thm}\label{thm1} For each $\eta \in (0,1/2]$ there exists a constant $A_\eta \geq 1$ such that the following holds.

Given $\epsilon \in (0,1/2)$ and $D \geq 2$,  define
\begin{equation*}
K = A_\eta \cdot \left( \log \left( \log (D)  \cdot \epsilon^{-1} \right) \right)^{1 + \eta}  \cdot \log(D \cdot \epsilon^{-1}).
\end{equation*}
Then
\[
\bigl\{ k \in \N :   \lambda_k \in ( \epsilon,1-\epsilon)  \bigr\} \subset [D - K , D + K].
\]
\end{thm}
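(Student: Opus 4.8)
The plan is to construct an orthonormal basis of time-frequency wave packets (a local-cosine basis) for $L^2(I)$ and use it to compare $T = R_I P_J R_I$ with an operator that is nearly diagonal in this basis. Since $I = [-D/2, D/2]$ has length $D$ and $J = [-1/2,1/2]$ has length $1$, the natural ``cells'' in phase space are $D$ unit-length subintervals of $I$, each paired with the single frequency band $J$. The local cosine construction gives, for each subinterval $I_n$ of $I$ (with $n = 1, \dots, D$) and each frequency index $\ell \in \Z_{\geq 0}$, a smooth bump $w_{n,\ell}$ supported near $I_n$ and oscillating at frequency $\approx \ell$; together these form an orthonormal basis of $L^2(\R)$ compatible with the partition of $I$. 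The key heuristic is that $P_J$ acts like the identity on wave packets whose frequency lies inside $J$ and like zero on those whose frequency lies well outside $J$; the only wave packets for which $\langle T w_{n,\ell}, w_{n,\ell}\rangle$ is bounded away from $0$ and $1$ are those with frequency near the boundary of $J$ (i.e., $\ell \approx \pm 1/2$ in suitably normalized units), and there are only $O(D)$ of those, plus a logarithmically small correction coming from the tails of the smooth cutoffs.

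The main steps, in order, are as follows. First I would recall (or construct) the local cosine basis adapted to the dyadic-type partition of $I$ into $\sim D$ unit cells, with smooth cutoff functions whose transition regions have length comparable to $1$; the Fourier-analytic estimates on such bases are classical (Coifman--Meyer, Auscher--Weiss--Wickerhauser). Second, I would compute $\langle T w_{n,\ell}, w_{m,\ell'}\rangle$ and show that $T$ is block-diagonal up to a small error: the off-diagonal entries decay rapidly in $|n - m|$ and in $|\ell - \ell'|$ because both $R_I$ (localizing in time) and $P_J$ (localizing in frequency, hence smoothing in time) are controlled on wave packets, with the tails governed by the smoothness of the cutoffs — this is where the extra $(\log\log)^{1+\eta}$ factor and the parameter $\eta$ enter, since one pays a price for making the cutoffs smooth enough that the error is summable. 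Third, I would diagonalize the near-diagonal part: the diagonal entries $\langle T w_{n,\ell}, w_{n,\ell}\rangle$ are $\geq 1 - \epsilon$ for all but $O(\log(D/\epsilon))$ pairs when $\ell$ is in the ``pass band'' and $\leq \epsilon$ for all but $O(\log(D/\epsilon))$ pairs when $\ell$ is outside, leaving a transition layer of total cardinality $K$. Fourth, I would invoke a perturbation/min-max argument (Weyl's inequality or the Cauchy interlacing-type estimate for the difference of the eigenvalue counting functions of $T$ and its near-diagonal approximation) to transfer the count of diagonal entries in $(\epsilon', 1-\epsilon')$ to a count of eigenvalues of $T$ in $(\epsilon, 1-\epsilon)$, at the cost of shrinking $\epsilon' \sim \epsilon$ up to logarithmic factors; combined with Theorem~\ref{thm0}, which pins the crossover at index $[D]$, this localizes the relevant indices to $[D-K, D+K]$.

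The main obstacle I anticipate is the second step: obtaining quantitative, summable off-diagonal decay for $T$ in the wave-packet basis. The difficulty is twofold. First, $P_J$ is a sharp frequency cutoff, so $P_J w_{n,\ell}$ has only polynomially decaying tails in the time variable unless one is careful; one must exploit the smoothness of $w_{n,\ell}$ (which is where the order of vanishing of the cutoff, tied to $\eta$, is spent) to get enough decay to sum over the $\sim D$ cells without losing more than a $\log D$ factor. Second, the error operator $T - T_{\mathrm{diag}}$ must be shown small \emph{in operator norm}, not merely entrywise — this requires a Schur-test or Cotlar--Stein almost-orthogonality argument on the doubly-indexed family $\{w_{n,\ell}\}$, and the bookkeeping of how the Schur bound depends on $D$ and $\epsilon$ is exactly what forces the sub-optimal $\bigl(\log(\log(D)\epsilon^{-1})\bigr)^{1+\eta}$ rather than a clean $\log\log$. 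Once the operator-norm bound $\|T - T_{\mathrm{diag}}\| < \epsilon/\text{(something)}$ is in hand, the eigenvalue localization follows from standard min-max principles, so the analytic heart of the argument is entirely in controlling this perturbation.
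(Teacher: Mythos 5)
Your high-level strategy — approximately diagonalize $T$ in a local-cosine basis and count the wave packets whose time-frequency profile straddles $\partial(I\times J)$ — is indeed the paper's strategy, but two of your concrete choices would derail the proof.

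The decisive gap is your choice of decomposition: you partition $I$ into $\sim D$ \emph{unit-length} cells. With cells of length $1$, the frequency grid for each cell has spacing $\sim 1$, and the Fourier width of each wave packet is $\sim 1$ as well, so \emph{every} cell contributes at least one wave packet whose frequency profile overlaps $\partial J$. That gives $\sim D$ ``transition'' indices, not $O(\log D)$, and the resulting bound on the transition eigenvalue count would be $O(D)$ — far weaker than the theorem claims. (You seem to conflate this with the fact that there are $\sim D$ eigenvalues near $1$.) The paper avoids this by using the \emph{Whitney decomposition} of $I$: intervals of length $\delta_j\sim\operatorname{dist}(I_j,\partial I)$, so there are only $O(\log(D/\delta_{\min}))$ of them and the large central ones have fine frequency resolution $\sim 1/\delta_j$. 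Each Whitney interval then contributes only $O(s)$ wave packets in the transition layer, for a total of $O(s\log(D/\delta_{\min}))$; see Lemmas \ref{local_count} and \ref{count_lem}. The Whitney structure is also what lets the cutoffs $\theta_j$ be supported inside $I$ (property \textbf{(W2)} gives room for the transition region), which a uniform partition would not allow near $\partial I$.

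The second issue is that the operator-norm perturbation argument you anticipate (Schur test / Cotlar–Stein, then Weyl) is heavier than what is needed and is not how the paper proceeds. Lemma \ref{lem1} requires only that the $\ell^2$ sums $\sum_{k\in\cI_0}\|T\phi_k\|^2$ and $\sum_{k\in\cI_2}\|T\phi_k-\phi_k\|^2$ are $\le \epsilon^3$; this is a much softer hypothesis than an operator-norm bound on $T - T_{\mathrm{diag}}$, and it follows directly from the Fourier energy estimates of Lemma \ref{error_lem} with no almost-orthogonality machinery. Relatedly, the $\bigl(\log(\log D\cdot\epsilon^{-1})\bigr)^{1+\eta}$ loss does not come from Schur/Cotlar–Stein bookkeeping; it comes from the fact that a compactly supported $C^\infty$ cutoff can have Fourier decay $\exp(-c|\xi|^{1-\eta})$ for any $\eta>0$ but not genuinely exponential decay (Lemma \ref{ft_bound}), so solving $\exp(-c s^{1-\eta})\log(D/\delta_{\min})\le\epsilon^3$ for $s$ forces the exponent $1/(1-\eta)\approx 1+\eta$. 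Your fourth step (combining with Theorem \ref{thm0} to center the index window at $[D]$) matches the paper.
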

\begin{remk}
In \cite{O1}, a one-sided bound $\bigl\{ k  \in \N : \lambda_k \geq \epsilon \bigr\} \subset [0, D + K']$ is proven, with $K' = C \log( D )^2 \log(1/\epsilon)$. Our result  has an improved dependence on $D$, but sub-optimal dependence on $\epsilon$. The optimal bound is expected to be $K'' = C  \log(D) \log(1/\epsilon)$, as predicted by the asymptotic analysis in \cite{L1}.
\end{remk}

Our strategy for proving Theorem \ref{thm1} will be to construct an \emph{approximate eigenbasis} for the operator $T$.  We define a family of translated and modulated bump functions (the local-cosine basis). The result follows by counting the number of basis functions whose ``time-frequency profile'' is localized inside the rectangle $I \times J$ in phase space.

The outline of the paper is as follows. In Section \ref{sec1} we present the functional analysis lemma that will be used to control the eigenvalues of our operator. In Section \ref{sec2} we construct a local-cosine basis and present its basic properties. We prove the key energy estimates for this  basis in Section \ref{sec3}. We conclude the paper in Section \ref{sec4} with the proof of Theorem \ref{thm1}.

\subsection{Acknowledgements}

We are grateful to Mark Tygert for useful discussions and for his comments during the preparation of this work. We are also grateful to Rachel Ward for proofreading an early version of this paper.

\section{The Main Lemma}
\label{sec1}


Let $\mathcal{H}$ be a complex Hilbert space, and let $\{ \phi_k\}_{k \in \cI}$ be an orthonormal basis for $\mathcal{H}$. Denote the inner product on $\cH$ by $\langle \cdot, \cdot \rangle$ and the Hilbert norm on $\cH$ by $\| \varphi \| := \langle \varphi,\varphi \rangle$ for $\varphi \in \cH$. 

Let $T : \mathcal{H} \rightarrow \mathcal{H}$ be a compact, positive-semidefinite operator, with operator norm at most $1$. By the spectral theorem for compact operators, there is a sequence $1 \geq \lambda_1 \geq \lambda_2 \geq \cdots > 0$, and a complete orthonormal basis $ \{ \varphi_\ell\}_{\ell \in \N}$ for the subspace $\mathcal{H}' = \ker(T)^\perp$ of $\cH$, satisfying the eigenvalue equation $T \varphi_\ell = \lambda_\ell \cdot \varphi_\ell$ for all $\ell \in \N$. For $\epsilon \in (0,1/2)$, let $M_\epsilon = M_\epsilon(T)$ denote the number of eigenvalues $\lambda_\ell$ (counted with multiplicity) that belong to the interval $(\epsilon,1-\epsilon)$.

\begin{lem}\label{lem1} 

Let $\{\phi_k\}_{k \in \cI}$ be an orthonormal basis for $\cH$. Assume that $\cI$ is the disjoint union $\cI_0  \cup \cI_1 \cup \cI_2$, where
\begin{equation} \label{assump1} \sum_{k \in \cI_0} \| T \phi_k \|^2 + \sum_{k \in \cI_2} \| T \phi_k - \phi_k \|^2 \leq \epsilon^3.
\end{equation}
Then we have $M_\epsilon \leq 2 \cdot \# ( \cI_1)$.
\end{lem}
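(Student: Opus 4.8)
The plan is to bound the dimension of the spectral subspace of $T$ carrying the ``intermediate'' eigenvalues and compare it directly with $\#(\cI_1)$. Let $V \subseteq \cH$ be the span of those eigenvectors $\varphi_\ell$ with $\lambda_\ell \in (\epsilon, 1-\epsilon)$; since $\lambda_\ell \to 0$, only finitely many eigenvalues exceed $\epsilon$, so $V$ is finite-dimensional with $\dim V = M_\epsilon$. I will in fact prove the stronger bound $M_\epsilon \le \#(\cI_1)$ (which gives the claimed $2\,\#(\cI_1)$) by showing that the orthogonal projection $\Pi$ of $\cH$ onto $\overline{\spa}\{\phi_k : k \in \cI_1\}$ restricts to an \emph{injective} map on $V$; then $M_\epsilon = \dim V \le \operatorname{rank}\Pi = \#(\cI_1)$.

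The mechanism is a dichotomy for the shifted operator $T - \tfrac12\id$. On one side, for every $v \in V$ we have $\|(T - \tfrac12\id)v\| \le (\tfrac12 - \epsilon)\|v\|$, because each eigenvalue contributing to $v$ satisfies $|\lambda_\ell - \tfrac12| < \tfrac12 - \epsilon$. On the other side, $\ker\Pi = \overline{\spa}\{\phi_k : k \in \cI_0 \cup \cI_2\}$, and I claim that $\|(T - \tfrac12\id)v\| \ge (\tfrac12 - \epsilon^{3/2})\|v\|$ for every $v$ in this subspace. To see this, expand $v = \sum_{k \in \cI_0} c_k\phi_k + \sum_{k \in \cI_2} c_k\phi_k$ with $\sum_k |c_k|^2 = \|v\|^2$, and split
\[
(T - \tfrac12\id)v = \Bigl( \tfrac12 \sum_{k \in \cI_2} c_k\phi_k - \tfrac12 \sum_{k \in \cI_0} c_k\phi_k \Bigr) + \Bigl( \sum_{k \in \cI_0} c_k\, T\phi_k + \sum_{k \in \cI_2} c_k (T\phi_k - \phi_k) \Bigr).
\]
The first bracketed vector has norm exactly $\tfrac12\|v\|$ by orthonormality of $\{\phi_k\}$ (the index sets $\cI_0$ and $\cI_2$ are disjoint), while the second has norm at most $\epsilon^{3/2}\|v\|$: bounding it by $\sum_{k\in\cI_0}|c_k|\,\|T\phi_k\| + \sum_{k\in\cI_2}|c_k|\,\|T\phi_k-\phi_k\|$ and applying Cauchy--Schwarz over the combined index set $\cI_0 \cup \cI_2$ turns the assumption \eqref{assump1} into the factor $(\epsilon^3)^{1/2}$. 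The triangle inequality then yields the claim.

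Combining the two estimates: if $v \in V \cap \ker\Pi$ were nonzero, then $(\tfrac12 - \epsilon^{3/2})\|v\| \le \|(T-\tfrac12\id)v\| \le (\tfrac12 - \epsilon)\|v\|$, forcing $\epsilon \le \epsilon^{3/2}$, which is impossible for $\epsilon \in (0,1/2)$. Hence $V \cap \ker\Pi = \{0\}$, so $\Pi|_V$ is injective and the dimension count above concludes the proof. I do not expect a genuine obstacle here; the only points deserving care are the choice to center the comparison at $\lambda = \tfrac12$ — precisely what makes both the ``near-kernel'' block $\cI_0$ and the ``near-identity'' block $\cI_2$ contribute the same constant $\tfrac12$ to the lower bound — and performing the Cauchy--Schwarz step as a single sum over $\cI_0 \cup \cI_2$ rather than block-by-block, so that the exponent $3$ in \eqref{assump1} is exactly consumed.
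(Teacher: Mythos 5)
Your proof is correct, and it takes a genuinely different route from the paper's. The paper lets $\pi_\epsilon$ denote the orthogonal projection onto the intermediate spectral subspace $S_\epsilon$, notes that $\pi_\epsilon$ commutes with $T$, uses the spectral inequalities $\epsilon\|\pi_\epsilon\phi\| \le \|T\phi\|$ and $\epsilon\|\pi_\epsilon\phi\| \le \|T\phi-\phi\|$ to deduce $\sum_{k\in\cI_0\cup\cI_2}\|\pi_\epsilon\phi_k\|^2 \le \epsilon \le 1/2$, and then invokes Parseval twice to write $\dim S_\epsilon = \sum_{k\in\cI}\|\pi_\epsilon\phi_k\|^2$; combining these and using $\|\pi_\epsilon\phi_k\|\le 1$ gives $\dim S_\epsilon \le 2\#(\cI_1)$, with the factor $2$ arising from the crude step $\dim S_\epsilon - 1/2 \ge \tfrac12\dim S_\epsilon$. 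You instead project the other way, onto $W_1=\overline{\spa}\{\phi_k:k\in\cI_1\}$, and prove injectivity of that projection on $S_\epsilon$ via a clean dichotomy for the shifted operator $T-\tfrac12\id$: it contracts by at least $\tfrac12-\epsilon$ on $S_\epsilon$ but expands by at least $\tfrac12-\epsilon^{3/2}$ on $\ker\Pi = W_0\oplus W_2$, and these bounds are incompatible for $\epsilon<1$. The Cauchy--Schwarz step over the combined index set $\cI_0\cup\cI_2$ is exactly what turns the $\epsilon^3$ of \eqref{assump1} into $\epsilon^{3/2}$, and the centering at $1/2$ is what lets both blocks contribute the same $\tfrac12$ to the lower bound. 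A pleasant side effect is that you obtain the sharper constant $M_\epsilon\le\#(\cI_1)$ rather than $2\#(\cI_1)$, with no case distinction on whether $S_\epsilon$ is trivial; the paper's approach is more ``trace-like'' and would generalize readily to weighted counts, whereas yours is a direct rank argument.
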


\begin{proof}

Note that $M_\epsilon$ is the dimension of the subspace $S_\epsilon := \spa \{ \varphi_\ell : \lambda_\ell \in (\epsilon,1- \epsilon) \}$ of $\cH$.

Consider the orthogonal projection operator $\pi_\epsilon : \cH \rightarrow S_\epsilon$. Note that $T$ and $\pi_\epsilon$ commute, since $S_\epsilon$ is spanned by a  collection of eigenvectors of $T$.

By definition, we have $\epsilon \| \phi \| \leq \| T \phi \|$ and $\epsilon \| \phi \| \leq \| T \phi - \phi \|$ for all $\phi \in S_\epsilon$. Hence, 
\begin{align*}
&\epsilon \cdot \| \pi_\epsilon \phi \| \leq  \| T \pi_\epsilon \phi\| = \| \pi_\epsilon T \phi \| \leq \| T \phi \|, \; \mbox{and} \\ 
& \epsilon \cdot \| \pi_\epsilon \phi \| \leq \| T \pi_\epsilon \phi - \pi_\epsilon \phi \|  = \| \pi_\epsilon( T \phi - \phi ) \| \leq \| T \phi - \phi\| \quad \mbox{for all} \; \phi \in \cH. \notag{}
\end{align*}
Using $\phi = \phi_k$ ($k \in \cI_0 \cup \cI_2$) in the above estimates, we see that
\[  \sum_{k \in \cI_0 \cup \cI_2} \epsilon^2 \cdot \| \pi_\epsilon \phi_k \|^2 \leq \sum_{k \in \cI_0 } \| T \phi_k \|^2 + \sum_{k \in \cI_2 } \| T \phi_k - \phi_k \|^2 \oleq{\eqref{assump1}} \epsilon^3 .\]
Therefore,
\begin{equation} \label{e1a}
\sum_{k \in \cI_0 \cup \cI_2} \| \pi_\epsilon \phi_k \|^2 \leq \epsilon \leq 1/2.
\end{equation}

We may assume that $M_\epsilon = \dim(S_\epsilon) \geq 1$, for otherwise the conclusion of the lemma is trivial. Then, by the Parseval identity and the fact that $\pi_\epsilon$ is an orthogonal projection operator for $S_\epsilon$, we have
\begin{equation*}
\| \psi \|^2 = \sum_{k \in \cI}  \langle \psi, \phi_k \rangle^2  = \sum_{k \in \cI} \langle \psi , \pi_\epsilon \phi_k \rangle^2 \quad \mbox{for all} \;  \psi \in S_\epsilon.
\end{equation*}
We fix an orthonormal basis for $ S_\epsilon$ and sum the previous estimate over all $\psi$ belonging to that basis. Thus we obtain
\begin{equation}\label{newstuff}
\dim(S_\epsilon) = \sum_{k \in \cI} \| \pi_\epsilon \phi_k \|^2,
\end{equation}
where here again we have used the Parseval identity to simplify the right-hand side. Recall that $\cI$ is the disjoint union $\cI_0 \cup \cI_1 \cup \cI_2$. Since $\dim(S_\epsilon) \geq 1$, we learn from \eqref{e1a} and \eqref{newstuff} that
\begin{equation}
\label{newstuff1} 
\sum_{k \in \cI_1 } \|  \pi_\epsilon \phi_k \|^2 \geq \dim(S_\epsilon) - 1/2 \geq (1/2) \cdot \dim(S_\epsilon).
\end{equation}
Finally, note that $\| \pi_\epsilon \phi_k \|^2 \leq \| \phi_k \|^2 = 1$ for any $k \in \cI_1$. Thus \eqref{newstuff1} yields $M_\epsilon = \dim(S_\epsilon) \leq 2  \cdot \#(\cI_1)$, as desired. \end{proof}

\section{Local Trigonometric Bases}\label{sec_trig}
\label{sec2}

In this section we exhibit a smooth compactly supported cutoff function whose Fourier transform has near-exponential decay. We follow an approach found in \cite{DH}. Using this cutoff function we construct an orthonormal basis for $L^2(I)$ ($I$ a compact interval) consisting of modulated bump functions. This is the \emph{local cosine basis} of Coifman-Meyer \cite{CM}.

\subsection{A cutoff function}\label{cut_sec}

Fix an integer $m \geq 1$, and define
\begin{equation} \label{c0}
\bump(x) = \left\{
        \begin{array}{ll}
            e^{- (1-x)^{-m}} e^{ - (x+1)^{-m}}  & \quad x \in (-1,1)\\
            0   & \quad x \in (-\infty,1] \cup [1,\infty).
        \end{array}
    \right.
 \end{equation}
Write  $D^k f$ to denote the $k$-fold derivative of a function $f : \R \rightarrow \R$.
 
\begin{lem}\label{der_bounds1} 
We have
\[ \lvert D^k \bump(x) \rvert \leq (16m)^{k} \cdot k^{(1+m^{-1})k} \;\;\; \mbox{for} \; k \geq 0, \; x \in \R.
\]
Here our notation is that $0^0 = 1$.
\end{lem}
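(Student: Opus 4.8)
The plan is to estimate $D^k a$ by writing $a(x) = f(x) g(x)$ with $f(x) = e^{-(1-x)^{-m}}$ and $g(x) = e^{-(1+x)^{-m}}$, and to bound the derivatives of each factor separately before combining via the Leibniz rule. By symmetry $g(x) = f(-x)$, so it suffices to bound $|D^k f(x)|$ for $x \in (-1,1)$, which in turn reduces (after the substitution $t = 1-x \in (0,2)$) to bounding the derivatives of $t \mapsto e^{-t^{-m}}$ on $(0,2)$. So the heart of the matter is a single one-variable estimate: controlling $\frac{d^k}{dt^k} e^{-t^{-m}}$ uniformly in $t > 0$.

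For that one-variable estimate I would proceed by induction on $k$, tracking a bound of the form $\bigl| \frac{d^k}{dt^k} e^{-t^{-m}} \bigr| \le C^k \cdot k^{(1+m^{-1})k}$ for a suitable absolute-times-$m$ constant $C$. The key mechanism is that each differentiation of $e^{-t^{-m}}$ brings down a factor $m t^{-m-1}$ from the exponent, and one must argue that, although $t^{-m-1}$ blows up as $t \to 0^+$, it is always dominated by the decaying factor $e^{-t^{-m}}$ — more precisely, $e^{-t^{-m}/2}$ — at the cost of a power of $k$. Quantitatively, $\sup_{t>0} t^{-j} e^{-t^{-m}/2} = (2j/m)^{j/m} e^{-j/m}$ by elementary calculus (set $s = t^{-m}$ and maximize $s^{j/m} e^{-s/2}$), and the exponent $j/m$ here, accumulated over $k$ differentiations with $j$ on the order of $k$, is exactly what produces the $k^{m^{-1} k}$ part of the claimed bound, while the remaining combinatorial bookkeeping (number of terms generated, binomial coefficients) produces the $k^k$ part and feeds the base $16m$. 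Alternatively, one can avoid induction by using the Cauchy integral formula: $e^{-t^{-m}}$ extends holomorphically to a neighborhood of $(0,\infty)$, and on a circle of radius $cr$ about a point $t = r$ one has good control of $|e^{-z^{-m}}|$, giving $|D^k f(t)| \le k! \, (cr)^{-k} \sup_{|z-t|=cr}|e^{-z^{-m}}|$; optimizing the radius against the growth of $\operatorname{Re}(z^{-m})$ yields the same type of bound. I would likely present the induction, since it keeps everything real and elementary.

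Once $|D^j f|$ and $|D^j g|$ are both bounded by $(C m)^j j^{(1+m^{-1})j}$ (with the convention $0^0 = 1$ handling $j=0$), the Leibniz formula gives
\[
|D^k a(x)| \le \sum_{j=0}^k \binom{k}{j} |D^j f(x)| \, |D^{k-j} g(x)| \le \sum_{j=0}^k \binom{k}{j} (Cm)^k \, j^{(1+m^{-1})j} (k-j)^{(1+m^{-1})(k-j)}.
\]
Using $j^{j}(k-j)^{k-j} \le k^k$ and $j^{j/m}(k-j)^{(k-j)/m} \le k^{k/m}$ (both because $x \log x$ is convex, or just by the crude bound that each factor is at most $k^k$ resp. $k^{k/m}$), together with $\sum_j \binom{k}{j} = 2^k$, collapses the sum to $(2Cm)^k k^{(1+m^{-1})k}$, and choosing the constants in the one-variable step so that $2C \le 16$ closes the argument.

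The main obstacle is the one-variable derivative estimate near $t = 0$: one must set up the inductive hypothesis with enough room — carrying along an explicit family of bounds for $t^{-j} e^{-t^{-m}}$-type quantities, not just for $D^k f$ itself — so that the extra $t^{-m-1}$ produced at each step can be absorbed without the constants or the powers of $k$ degrading. Getting the exponent to come out as exactly $(1 + m^{-1})k$, rather than something slightly worse, is where the calibration of the $e^{-t^{-m}/2}$ splitting (reserving half the exponential decay to kill the polynomial blow-up, keeping the other half to retain the bound) has to be done carefully.
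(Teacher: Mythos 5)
Your plan---factor $\bump$ into two one-sided pieces $e^{-(1\mp x)^{-m}}$, reduce to a one-variable bound for $D^k\bigl(e^{-t^{-m}}\bigr)$ via an induction that tracks polynomial blow-up against exponential decay, then combine with Leibniz---is essentially the paper's own argument. The paper's instantiation of your ``strengthened inductive hypothesis'' is the explicit structural claim that $D^k\bigl(e^{-x^{-m}}\bigr)$ is a sum of $2^k$ terms $w\,x^{-[(m+1)j+r]}\,e^{-x^{-m}}$ with $j+r=k$ and $\lvert w\rvert \le m^j\,[(m+1)j+r]^r$, each then bounded directly by $y^R e^{-y}\le R^R$ with $y=x^{-m}$, so no splitting of the exponential into halves is required; this is a cosmetic rather than substantive difference from your sketch.
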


\begin{proof}
First observe that $D^k (e^{-x^{-m}})$ ($x>0$) is equal to the sum of $2^k$ terms of the form 
\begin{align*}
F_{w,j,r}(x) := w \cdot x^{-[ (m+1)j + r]} \cdot e^{-x^{-m}},
\end{align*}
where $w$ is a real number, and $j,r$ are integers satisfying $j + r = k$ and $\lvert w \rvert \leq m^j \cdot [(m+1)j + r]^r$. This statement is easily proven by induction on $k$.

Using the estimate $y^{R} e^{- y} \leq R^R$, for $y,R > 0$, we obtain
\begin{align*}
\lvert F_{w,j,r}(x) \rvert &\leq m^j \cdot [(m+1)j + r]^r \cdot \left[ [(m+1)j + r] \cdot m^{-1} \right]^{[(m+1)j + r] \cdot m^{-1}} \\
& = m^{j+r} \cdot \left[ [ (m+1)j + r] \cdot m^{-1} \right]^{(m+1)\cdot(j + r) \cdot m^{-1}} \\
& \leq m^k \left[(1 + m^{-1}) \cdot k \right]^{(1+m^{-1}) \cdot k} \\
& \leq (4m)^k  \cdot k^{(1+m^{-1}) \cdot k}.
\end{align*}
We conclude that $\lvert D^k (e^{-x^{-m}}) \rvert \leq (8m)^k k^{(1+m^{-1})\cdot k}$ for $x >  0$. Hence, by the Leibniz rule we have
\begin{align*}
 \lvert D^k \bump(x) \rvert & \leq 2^{k} \cdot \max_{0 \leq k' \leq k} \left[  \sup_{x \in (0,2]} \lvert D^{k'} \bigl( e^{-1/x^m}\bigr) \rvert \right] \cdot  \left[  \sup_{x \in (0,2]} \lvert D^{k - k'} \bigl( e^{-1/x^m}\bigr) \rvert \right] \\
& \leq 2^k  \max_{0 \leq k' \leq k} \left[ 2^{3k'} m^{k'} (k')^{(1+m^{-1})k'} \right] \cdot \left[ 2^{3(k-k')} m^{k-k'} (k-k')^{(1+m^{-1})(k-k')}\right] \\
& \leq (16m)^k \cdot k^{(1+m^{-1}) \cdot k} \qquad\qquad \mbox{for all} \; x \in \R.
\end{align*}
This completes the proof of the lemma.
\end{proof}

We define
\begin{equation}
\label{A_defn}
A(x) := \frac{\pi}{2 \int_\R \bump  dy } \cdot \int_{- \infty}^x \bump(y) dy.
\end{equation}
Since $\bump(y)$ is an even function (see \eqref{c0}), we have 
\begin{equation}
\label{symmetry1}
A(x) + A(-x) = \pi/2.
\end{equation} 
Now let  $\theta(x) := \sin(A(x))$. From \eqref{symmetry1} we deduce that $\theta(-x) = \cos(A(x))$. Therefore,
\begin{equation} \label{c3}
\theta^2(x) + \theta^2(-x) = \sin^2(A(x)) + \cos^2(A(x)) = 1 \quad \mbox{for any} \; x \in \R.
\end{equation}
Since $\bump(x) = 0 $ for $x \in \R \setminus [-1,1]$, we have $A(x) = 0$ for $x \leq -1$, and $A(x) = \pi/2$ for $x \geq 1$. Hence,
\begin{equation}\label{c4}
\theta(x) = 0 \; \mbox{for} \; x \leq -1; \qquad \theta(x) = 1 \; \mbox{for} \; x \geq 1.
\end{equation}

The next lemma provides an estimate on the size of the derivatives of $\theta$. 

\begin{lem}\label{lem2}
Let $F: \C \rightarrow \C$ be an entire function, and let $f : \R \rightarrow \R$ be $C^\infty$.

Assume that there exist $C \geq 1$ and $\gamma \geq 1$ such that $\lvert D^k f(x) \rvert \leq C^k \cdot k^{\gamma k}$ for all $k \geq 0$.

Then there exists $C_0 \geq 1$ determined by $C$, $\gamma$, and $F$, such that $ \lvert  D^k \left[ F(f(x)) \right] \rvert \leq C_0^k \cdot k^{\gamma k}$ for all $k \geq 0$.
\end{lem}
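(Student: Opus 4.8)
The plan is to combine the Faà di Bruno formula for the derivatives of a composition with the hypothesized Gevrey-type bounds on the derivatives of $f$ and the fact that an entire function automatically satisfies growth bounds of the form $|D^j F(z)| \leq B \cdot R^j \cdot j!$ on any bounded set. Since $f$ is real-valued and continuous, $f(x)$ ranges over a bounded set only if $f$ itself is bounded; but in our application $f = A(x)$ is bounded, so I would first remark that it suffices to treat the case where $f$ is bounded (this is the only case used), and then note that for a general entire $F$ one needs the derivative bounds $|D^j F|$ on the compact set $\overline{f(\R)}$. Concretely, fix $\rho > 0$ so that $f(\R) \subset [-\rho,\rho]$; by Cauchy's estimates applied to $F$ on a disk of radius $2\rho$ centered at points of $[-\rho,\rho]$, there is $B_0 = B_0(F,\rho) \geq 1$ with $|D^j F(t)| \leq B_0 \cdot (2\rho)^{-j} \cdot j! \leq B_0 \cdot B_1^j \cdot j!$ for all $t \in [-\rho,\rho]$ and all $j \geq 0$, where $B_1 := \max(1,(2\rho)^{-1})$.

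Next I would write the Faà di Bruno formula
\[
D^k\bigl[F(f(x))\bigr] = \sum \frac{k!}{b_1! \, b_2! \cdots b_k!} \, D^{(b_1 + \cdots + b_k)}F(f(x)) \prod_{j=1}^k \left( \frac{D^j f(x)}{j!} \right)^{b_j},
\]
the sum being over all tuples $(b_1,\dots,b_k)$ of nonnegative integers with $\sum_j j \, b_j = k$. Setting $b := b_1 + \cdots + b_k$, I insert the bound $|D^j f(x)| \leq C^j j^{\gamma j}$ and the bound $|D^b F(f(x))| \leq B_0 B_1^b b!$, and estimate each term. The factor $\prod_j (C^j j^{\gamma j}/j!)^{b_j}$ has a total power of $C$ equal to $\sum_j j b_j = k$, contributing $C^k$; the power-of-$j$ factors contribute $\prod_j j^{\gamma j b_j} \leq \prod_j k^{\gamma j b_j} = k^{\gamma k}$ (using $j \leq k$); and the remaining combinatorial factors $\frac{k!}{\prod_j b_j!} \cdot b! \cdot B_1^b \cdot \prod_j (1/j!)^{b_j}$ must be bounded by $C_1^k$ for a constant $C_1$ depending only on $B_1$. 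This last estimate is the standard fact that the Gevrey-type classes $G^\gamma$ (functions with $|D^k f| \leq C^k k^{\gamma k}$, equivalently $\leq C^k (k!)^\gamma$ up to adjusting $C$) are closed under composition with analytic functions for $\gamma \geq 1$; the combinatorial sum $\sum_{(b_j)} \frac{k!}{\prod b_j!} b! \prod (1/j!)^{b_j} B_1^b$ is dominated by $C_1^k$ because the number of such tuples is at most $2^k$ (partitions of $k$) times terms controlled by generating-function bounds — more carefully, one uses that $\sum \frac{k!\, b!}{\prod b_j!\, \prod (j!)^{b_j}} z^b = $ the $k$-th Taylor coefficient of $(1-z)^{-1}$ type expressions, which grows at most geometrically in $k$.

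The main obstacle will be the clean bookkeeping of the purely combinatorial factor, i.e. showing $\sum_{\sum j b_j = k} \frac{k!}{\prod_j b_j!} \cdot b! \cdot \prod_j \frac{1}{(j!)^{b_j}} \cdot B_1^{\,b} \leq C_1^k$. I would handle this by first passing from $k^{\gamma k}$ to the equivalent formulation with $(k!)^\gamma$ (using $k! \leq k^k \leq e^k k!$), which lets me absorb one factor of $k!$ from Faà di Bruno into the $(k!)^\gamma$ bound (legitimate since $\gamma \geq 1$), and then recognizing $\sum_{\sum j b_j = k} \frac{k!}{\prod_j b_j! (j!)^{b_j}} x^{b}$ as (by the exponential formula / Bell polynomial identity) the coefficient extraction $k! \,[t^k] \exp\!\bigl(x(e^t - 1)\bigr)$, which for fixed $x$ is $B_k(x) \leq C(x)^k k!$ by standard Bell-number asymptotics. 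Combining, every factor is at most $(\text{const})^k$ times $(k!)^\gamma$, hence at most $C_0^k k^{\gamma k}$ after converting back, with $C_0$ depending only on $C$, $\gamma$, and $F$ (through $\rho$ and $B_0, B_1$). This completes the proof.
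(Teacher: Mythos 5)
The paper does not actually prove Lemma \ref{lem2}; it simply cites \cite{DH}, so there is no in-text argument to compare against. Your overall strategy --- Fa\`{a} di Bruno's formula, Cauchy estimates for $D^b F$ on the compact range of $f$, the elementary factorizations extracting $C^k$ and the $\gamma$-power factor, and a Gevrey reformulation $k^{\gamma k} \leftrightarrow (k!)^\gamma$ to make the combinatorics tractable --- is a correct and standard route for this kind of composition theorem, and the proof does close along these lines once the bookkeeping is done carefully.

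Two remarks on the details. First, you spend some effort worrying whether $f$ is bounded; but the $k=0$ case of the hypothesis already forces $\lvert f(x) \rvert \leq C^0\cdot 0^{\gamma\cdot 0}=1$ (with the paper's convention $0^0=1$), so boundedness is automatic and there is no need to appeal to the particular application $f=A$. Second, and more substantively, the combinatorial step as written contains a false intermediate claim and a mismatched identity. The sum $\sum_{\sum_j j b_j = k} \frac{k!\,b!}{\prod_j b_j!\,(j!)^{b_j}}\,B_1^{b}$ is \emph{not} bounded by $C_1^k$: the single term $b_1=k$ already contributes $k!\,B_1^{k}$, so the sum grows at least like $k!$. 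Your proposed remedy --- absorb one factor of $k!$ into the target $(k!)^\gamma$ using $\gamma\ge 1$ --- is exactly right, but the Bell/Touchard generating-function identity you then invoke, $\sum \frac{k!}{\prod_j b_j!\,(j!)^{b_j}}\,x^{b} = k!\,[t^k]\exp\bigl(x(e^t-1)\bigr)$, has silently dropped the $b!$ that comes from the Cauchy estimate $\lvert D^b F\rvert \le B_0 B_1^{b}\,b!$, so it is not the sum that actually appears. With the $b!$ present, the relevant identity is the ``ordered'' one: for fixed $b$ one has $\sum_{\sum_j b_j = b,\ \sum_j j b_j = k} \frac{b!}{\prod_j b_j!} = \binom{k-1}{b-1}$ (this counts compositions of $k$ into $b$ ordered parts), so $\sum_{\sum_j j b_j = k} \frac{b!}{\prod_j b_j!}\,B_1^{b} = \sum_{b=1}^{k}\binom{k-1}{b-1}B_1^{b} = B_1(1+B_1)^{k-1} \le (1+B_1)^{k}$. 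Combining this with $\prod_j (j!)^{(\gamma-1)b_j} \le (k!)^{\gamma-1}$ (which holds because $\frac{k!}{\prod_j b_j!\,(j!)^{b_j}}$ is a positive integer, hence $\prod_j (j!)^{b_j}\le k!$) yields $\lvert D^k[F\circ f]\rvert \le B_0\bigl(C(1+B_1)\bigr)^{k}(k!)^{\gamma} \le C_0^{k}\,k^{\gamma k}$ as required. So your plan is sound, but the Touchard-polynomial step should be replaced by the compositions count (equivalently, the ordered-Bell generating function $1/(1-x(e^t-1))$), which correctly carries the $b!$.
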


A proof of Lemma \ref{lem2} is given in \cite{DH}.

From the definition \eqref{c0} it is clear that $\int \bump(y) dy \geq \left[ e^{-1}\right]^2 \geq 1/16$. We apply Lemma \ref{der_bounds1} to bound the derivatives of $A(x)$ defined in \eqref{A_defn}. Thus we obtain
\begin{equation*}
\lvert D^k A(x) \rvert \leq  C^k \cdot k^{(1+m^{-1}) \cdot k} \qquad \mbox{for all} \; k \geq 0.
\end{equation*}
We apply Lemma \ref{lem2} to the functions $F(z) = \sin(z)$ and $f(x)= A(x)$. Thus,  for each $m \geq 1$ there exists $C_m \geq 1$ such that
\begin{equation}\label{c5}
\lvert D^k \theta(x) \rvert \leq C_m^k \cdot k^{(1+m^{-1}) \cdot k} \quad \mbox{for all} \; k \geq 0.
\end{equation}

We summarize conclusions \eqref{c3}, \eqref{c4}, and \eqref{c5}, in the following result. 

\begin{prop}\label{cutoff_theorem}
Given a real number $\eta>0$ there exists a $C^\infty$ function $\theta : \R \rightarrow \R$ satisfying
\begin{enumerate}[(a)]
\item $\theta(x) = 0$ for $x \leq -1$, and $\theta(x) = 1$ for $x \geq 1$.
\item $\theta^2(x) + \theta^2(-x) = 1$ for all $x \in \R$.
\item $\lvert D^k \theta(x) \rvert \leq C_\eta^k \cdot k^{(1 + \eta)\cdot k}$ for all integers $k \geq 0$ and all $x \in \R$. Here, $C_\eta \geq 1$ is a constant determined by $\eta$.
\end{enumerate}
\end{prop}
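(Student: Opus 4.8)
The plan is to produce the function $\theta$ explicitly as $\theta(x) := \sin(A(x))$, with $A$ defined by \eqref{A_defn} from the bump function $\bump$ of \eqref{c0} for a carefully chosen integer $m = m(\eta)$, and then to read off the three conclusions directly from the identities already established. Properties (a) and (b) require no new argument whatsoever: for this $\theta$ they are literally \eqref{c4} and \eqref{c3}, which hold for every admissible $m$. So the sole content of the proof is to arrange the derivative exponent in part (c) to be $1+\eta$ rather than $1 + m^{-1}$.

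For that, I would fix $m := \max\{1, \lceil 1/\eta \rceil\}$, so that $m \ge 1$ is an integer with $m^{-1} \le \eta$. Feeding this $m$ into the development culminating in \eqref{c5} produces a constant $C_m \ge 1$ with $\lvert D^k \theta(x) \rvert \le C_m^k \cdot k^{(1+m^{-1}) k}$ for all integers $k \ge 0$ and all $x \in \R$. Since $1 + m^{-1} \le 1 + \eta$ and, for $k \ge 1$, the map $a \mapsto k^{a k}$ is nondecreasing, we get $k^{(1+m^{-1})k} \le k^{(1+\eta)k}$, hence the claimed bound for every $k \ge 1$. The case $k = 0$ is checked by hand: $\lvert \theta(x) \rvert = \lvert \sin(A(x)) \rvert \le 1 = C_m^0 \cdot 0^0$ under the convention $0^0 = 1$. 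Setting $C_\eta := C_m$ then yields (c), and this constant depends only on $\eta$ because $m$ is determined by $\eta$ and $C_m$ by $m$.

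I would also record that $\theta$ is $C^\infty$: $A$ is smooth, being an integral of the smooth function $\bump$ (the smoothness of $\bump$ is what underlies Lemma \ref{der_bounds1}), and $\sin$ is entire, so the composition is smooth — this is in any case implied by the finiteness of all the bounds in (c) together with Lemma \ref{lem2}. I do not expect a genuine obstacle: all the analytic work has already been done — the derivative estimates for $\bump$ in Lemma \ref{der_bounds1}, the composition estimate of Lemma \ref{lem2}, and the symmetry identity \eqref{symmetry1} — and the proposition is just the bookkeeping step that packages \eqref{c3}, \eqref{c4}, and \eqref{c5} into one statement. The only point demanding a little care is the constant's dependence: one must note explicitly that $m$, and therefore $C_m$, is a function of $\eta$ alone, so that $C_\eta$ is well-defined as claimed.
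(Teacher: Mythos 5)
Your proof follows essentially the same route as the paper: it constructs $\theta = \sin \circ A$ from the bump $\bump$, invokes \eqref{c3}, \eqref{c4}, \eqref{c5}, and then fixes $m$ large enough that $m^{-1} \le \eta$ (the paper simply says ``choose $m > \eta^{-1}$''). Your version is a bit more careful about the $k=0$ case and the monotonicity of $a \mapsto k^{ak}$, but there is no substantive difference.
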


To obtain the previous result from \eqref{c3}, \eqref{c4}, and \eqref{c5}, we choose $m > \eta^{-1}$.

\subsection{Whitney intervals}
\label{whit}

By a \emph{dyadic interval} we mean an interval of the form $(  k \cdot 2^{-\ell}, (k+1) \cdot 2^{-\ell} ]$ for $k, \ell \in \Z$. 

Let $I  = [ - D/2, D/2]$ for $D > 0$. The \emph{Whitney decomposition} of $I$ is a collection $\cW = \{ \cI_j\}_{j \in \cJ} $ consisting of dyadic intervals. Its basic properties are as follows.
\begin{description}
\item[(W1)] The intervals $I_j$ ($j \in \cJ$) are pairwise-disjoint, and $I = \bigcup_{j \in \cJ} I_j$ .
\item[(W2)] We have $\lvert I_j \rvert \leq \dist(I_j,\partial I) \leq 5 \cdot \lvert I_j \rvert$ for all $j \in \cJ$.
\end{description}
For a construction of the Whitney decomposition, see \cite{Stein}.

\subsection{The local cosine basis}

Let $\cW = \{I_j\}_{j \in \cJ}$ be the Whitney decomposition of an interval $I$. We write $I_j = (x_j, x_j + \delta_j]$ for $j \in \cJ$. 

We choose positive real numbers $\eta_j$ and $\eta_j'$ satisfying $\eta_j + \eta_j' \leq \frac{1}{10}\delta_j$. Then define a $C^\infty$ function $\theta_j : \R \rightarrow \R$ by the formula
\begin{equation} \label{cutoff}
\theta_j(x) := \theta\left(\frac{x- x_j}{\eta_{j}} \right) \cdot \theta \left(\frac{(x_j + \delta_j) - x}{\eta_j'}\right).
\end{equation}
Part (a) of Proposition \ref{cutoff_theorem} implies that $\theta_j$ is supported on $\left[x_j- \frac{1}{10} \delta_j, x_j+ \frac{11}{10}\delta_j\right]$. Since $\dist(I_j,\partial  I) \geq | I_j| = \delta_j$ (see \textbf{(W2)}), we have
\begin{equation}
\label{supp_cond}
\supp \theta_j \subset I \;\; \mbox{for all} \; j \in \cJ.
\end{equation}

Denote
\[ \Gamma = \{ (j, k ) : j \in \cJ, \;  k \in \Z, \; k \geq 0 \}.\]
For any $(j, k) \in \Gamma$ we define $ \Phi_{(j,k)} \in C_c^\infty(I)$ by
\begin{equation} \label{lcb} \Phi_{(j,k)}(x) = C_{(j,k)} \cdot \delta_j^{-1/2} \cdot \theta_j(x) \cos\left(\pi \cdot \delta_j^{-1} \cdot \left(k + 1/2\right) \cdot(x - x_j) \right),
\end{equation}
where
\begin{equation}
\label{constdefn}
C_{(j,k)} = \delta_j \cdot \left( \int_\R \theta_j^2(x) \cos^2\left(\pi \cdot \delta_j^{-1} \cdot \left(k + 1/2\right) \cdot(x - x_j) \right) dx \right)^{-1}.
\end{equation}
This definition of $C_{(j,k)}$ ensures the normalization condition $\|\Phi_{(j, k )} \|_{L^2(I)} = 1$. \\
Note that $\theta_j \geq 1$ on $[x_j + \frac{1}{10} \delta_j, x_j + \frac{9}{10}\delta_j]$, and that $\theta_j$ is supported on $[x_j- \frac{1}{10}\delta_j, x_j + \frac{11}{10}\delta_j]$. Therefore, the value of the integral term in the parentheses in \eqref{constdefn} is  between $\frac{1}{100}\delta_j$ and $2\delta_j$. We conclude that
\begin{equation}
\label{Cbd}
\frac{1}{2} \leq C_{(j,k)} \leq 100.
\end{equation}

A theorem of Coifman-Meyer \cite{CM} states that $\{\Phi_{(j,k)}\}_{(j,k) \in \Gamma}$ is an orthonormal basis for $L^2(I)$ for an appropriate choice of the constants $\eta_j$ and $\eta_j'$ which satisfy 
\begin{equation}
\label{eta_bds}
\frac{1}{100}\delta_j \leq \eta_j , \eta_j' \leq \frac{1}{10} \delta_j.
\end{equation} 
This is often called the local cosine basis or the Coifman-Meyer basis. We note that the construction in \cite{CM} uses a different cutoff function $\theta$. However, the proof of orthonormality requires only the conditions found in parts (a) and (b) of Proposition \ref{cutoff}. Consequently, the arguments in \cite{CM} establish orthonormality for the basis constructed here. We note that our cutoff function $\theta$ satisfies the derivative bounds in part (c) of Proposition \ref{cutoff}, which will be used later.

We henceforth assume that $\eta_j$ and $\eta_j'$ satisfy \eqref{eta_bds} and are chosen so that $\{ \Phi_{(j,k)}\}_{(j,k) \in \Gamma}$ is an orthonormal basis for $L^2(I)$.

We write $\widehat{f}$ or $\cF(f)$ to denote the Fourier transform of a function $f \in L^2(\R)$, defined via the formula 
$$\widehat{f}(\xi) = \int_\R f(x) e^{-2 \pi i x \xi} dx.$$

We require the following lemma from \cite{DH}.

\begin{lem}\label{ft_bound}
Let $\theta \in C^\infty(\R)$. Let $C \geq 1$ and $\delta \geq 1$ be such that
\begin{enumerate}[(a)]
\item $\theta$ is supported on $[-1,1]$, and 
\item $\lvert D^k \theta(x) \rvert \leq C^k k^{\delta k}$ for all $k \geq 0$ and $x \in \R$.
\end{enumerate}

Then $\lvert \widehat{\theta} (\xi) \rvert \leq A \exp( - a \cdot | \xi|^{1/\delta}) $ for all $\xi \in \R$, where $a,A > 0$ depend only on $C$ and $\delta$.
\end{lem}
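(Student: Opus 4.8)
The plan is to estimate the Fourier transform $\widehat\theta(\xi) = \int_{-1}^1 \theta(x) e^{-2\pi i x\xi}\, dx$ by the standard device of shifting the contour of integration into the complex plane, exploiting the near-analyticity of $\theta$ that is encoded in its Gevrey-type derivative bounds. First I would recall that bounds of the form $|D^k\theta(x)| \le C^k k^{\delta k}$ mean $\theta$ belongs to the Gevrey class $G^\delta$, and that the compact support forces $\theta$ to \emph{not} be analytic when $\delta>1$; so a direct contour shift is impossible and one must instead use a quantitative ``almost analytic extension'' or, equivalently, an iterated integration-by-parts argument. The cleaner route is integration by parts: for any integer $k\ge 0$,
\[
\widehat\theta(\xi) = \frac{1}{(2\pi i \xi)^k}\int_{-1}^1 (D^k\theta)(x)\, e^{-2\pi i x\xi}\, dx,
\]
valid because all boundary terms vanish (every derivative of $\theta$ vanishes at $\pm 1$, since $\theta$ is supported in $[-1,1]$ and is $C^\infty$). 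Hence $|\widehat\theta(\xi)| \le 2 (2\pi|\xi|)^{-k} C^k k^{\delta k}$ for every $k\ge 0$.

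The second step is to optimize over $k$. Writing $|\xi|$ large (the bounded range of $\xi$ is handled trivially since $|\widehat\theta(\xi)|\le \int_{-1}^1|\theta|\le 2C^0 = 2$ always), I would choose $k$ to roughly minimize $(C/(2\pi|\xi|))^k k^{\delta k}$. Taking logarithms, the exponent is $k\log(C/(2\pi|\xi|)) + \delta k\log k$, and treating $k$ as continuous and differentiating gives the near-optimal choice $k \approx c\,|\xi|^{1/\delta}$ for a suitable small constant $c = c(C,\delta)$ (one wants $\delta\log k \approx \log(2\pi|\xi|/C)$, i.e. $k^\delta \approx $ const$\cdot|\xi|$). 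Plugging this back in yields $\log|\widehat\theta(\xi)| \le -a\,|\xi|^{1/\delta} + O(1)$, i.e. $|\widehat\theta(\xi)|\le A\exp(-a|\xi|^{1/\delta})$ with $a,A$ depending only on $C$ and $\delta$; one should of course take $k=\lfloor c|\xi|^{1/\delta}\rfloor$ and carry along the harmless discretization and lower-order terms. A clean way to present the optimization is via the elementary inequality: for the best integer $k$, the bound $(2\pi|\xi|)^{-k}C^k k^{\delta k} = \exp\big(-k[\log(2\pi|\xi|/C) - \delta\log k]\big)$, and one verifies $\log(2\pi|\xi|/C) - \delta\log k \ge \tfrac12\log(2\pi|\xi|/C)$ once $k\le (2\pi|\xi|/C)^{1/(2\delta)}$ say, then notes $k\big(\tfrac12\log(2\pi|\xi|/C)\big)\gtrsim |\xi|^{1/(2\delta)}\log|\xi|$ — though to get the sharp exponent $1/\delta$ rather than $1/(2\delta)$ one does need the genuinely optimal choice $k\sim |\xi|^{1/\delta}$, so I would be careful to carry out that optimization honestly rather than settle for the crude version.

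The main obstacle is purely the book-keeping in the optimization: ensuring that the near-optimal \emph{integer} $k$ really produces the exponent $|\xi|^{1/\delta}$ (and not a worse power), and tracking that the constants $a, A$ depend only on $C$ and $\delta$ and not on $\xi$. A subtle point worth stating explicitly is the justification of the repeated integration by parts: since $\theta\in C^\infty(\R)$ is supported in $[-1,1]$, all derivatives $D^j\theta$ vanish identically outside $(-1,1)$ and, being continuous, vanish at $x=\pm1$, so no boundary terms appear at any stage. Everything else is elementary; there is no hard analytic input beyond the Gevrey derivative bound already supplied in hypothesis (b). I would also remark that this is exactly the classical estimate relating Gevrey-$\delta$ regularity with compact support to the $\exp(-a|\xi|^{1/\delta})$ decay of the Fourier transform, which is why the paper attributes it to \cite{DH}.
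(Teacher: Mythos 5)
Your proof is correct and is the standard argument for this statement. The paper itself does not supply a proof of Lemma~\ref{ft_bound} but defers to \cite{DH}, and the argument there is exactly the one you outline: integrate by parts $k$ times (no boundary terms since $\theta$ and all its derivatives vanish at $\pm 1$), obtain $|\widehat\theta(\xi)| \le 2\,(2\pi|\xi|)^{-k} C^k k^{\delta k} = 2\bigl(C k^\delta/(2\pi|\xi|)\bigr)^k$, and then optimize in the integer $k$. Taking $k = \lfloor (2\pi|\xi|/(Ce))^{1/\delta}\rfloor$ makes the base $\le e^{-1}$, giving $|\widehat\theta(\xi)| \le 2e\exp\bigl(-(2\pi|\xi|/(Ce))^{1/\delta}\bigr)$ for $|\xi|$ large enough that $k\ge 1$; small $|\xi|$ is absorbed into the constant $A$ using the trivial bound $|\widehat\theta|\le 2$ (which uses the $k=0$ case of hypothesis (b) with the convention $0^0=1$). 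Your aside about the crude choice producing only exponent $1/(2\delta)$ is a fair caution but unnecessary — the clean choice above is no harder and gives $1/\delta$ directly with $a$ and $A$ manifestly depending only on $C$ and $\delta$.
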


We define $\psi_j(x) := \theta_j( x \cdot \delta_j + x_j)$ for $j \in \cJ$, which can be rewritten as $\psi_j(x) = \theta \left(x \cdot \frac{\delta_j}{\eta_j} \right) \cdot \theta \left( (1-x) \cdot \frac{\delta_j}{\eta_j'} \right)$ (see \eqref{cutoff}). From part (c) of Proposition \ref{cutoff_theorem} and since $\eta_j,\eta_j' \in [\frac{1}{100} \delta_j, \frac{1}{10} \delta_j]$ we conclude that $\lvert D^k \psi_j(x) \rvert \leq C_\eta^k \cdot k^{(1+\eta)\cdot k}$ for $k \geq 0$ and $x \in \R$. By applying Lemma \ref{ft_bound} we learn that
\[
\lvert \widehat{\psi_j}(\xi) \rvert \leq A_{\eta} \cdot \exp \left( - a_\eta \cdot | \xi |^{(1+\eta)^{-1}} \right)
\]
Because of the scaling relationship between $\psi_j$ and $\theta_j$ and simple properties of the Fourier transform, as well as the bound $1 - \eta \leq (1+\eta)^{-1}$ ($\eta>0$), we conclude that
\begin{equation}
\label{expdecay}
\lvert \widehat{\theta}_j(\xi) \rvert \leq A_\eta \cdot \delta_j \cdot \exp( - a_\eta \cdot | \delta_j \cdot \xi |^{1 - \eta} ) \;\; \mbox{for} \; \omega \in \R. 
\end{equation}

Using the formula \eqref{lcb} and the scaling/translation/modulation properties of the Fourier transform $\cF$, we have
\begin{align*}
\cF(\Phi_{(j,k)})(\xi) =  C_{(j,k)} \cdot \delta_j^{-1/2} \cdot \frac{1}{2} \cdot \Biggl[ & \widehat{\theta_j}\left(\xi - \frac{1}{2} (k+1/2) \cdot \delta_j^{-1} \right) \cdot \exp\left(- \pi i \cdot (k+1/2) \cdot \delta_j^{-1} \cdot x_j \right) \\
+ & \widehat{\theta_j} \left(\xi + \frac{1}{2} (k+1/2) \cdot \delta_j^{-1} \right) \cdot \exp \left(\pi i \cdot (k+1/2) \cdot \delta_j^{-1} \cdot x_j \right) \Biggr].
\end{align*}
In particular, thanks to \eqref{expdecay} and \eqref{Cbd} we have
\begin{equation}
\label{uniformbd1}
\lvert \cF(\Phi_{(j,k)}) (\xi) \rvert \leq C_\eta \cdot \delta_j^{1/2} \sum_{\sigma = \pm 1 } \exp \left( - a_\eta \cdot   \left\lvert  \delta_j \cdot \xi - \sigma \cdot \frac{1}{2} \left( k + \frac{1}{2} \right) \right\rvert^{1- \eta} \right),
\end{equation}
for constants $a_\eta > 0$ and $C_\eta > 0$. 

For $k \in \Z_{\geq 0}$ and $j \in \cJ$ we denote
\[
\xi_{jk} = (2k+1) \cdot (4 \delta_j)^{-1}.
\]
If we let $ B_\eta(\xi) := A_\eta \exp \left(- a_\eta \cdot \lvert \xi \rvert^{1 - \eta} \right)$, then the bound \eqref{uniformbd1} states that
\begin{equation}
\label{uniformbd}
\lvert \cF(\Phi_{(j,k)}) (\xi) \rvert \leq \delta_j^{1/2} \cdot \left[ B_\eta\left( \delta_j \cdot ( \xi -  \xi_{jk} ) \right) + B_\eta\left( \delta_j \cdot ( \xi + \xi_{jk} ) \right) \right].
\end{equation}

\section{Energy estimates}\label{sec_energy}
\label{sec3}

Recall that $J = [-1/2,1/2]$ is the frequency localization interval and $I = [-D/2,D/2]$ is the time localization interval. We decompose $I$ into its Whitney decomposition $\cW = \{I_j\}_{j \in \cJ}$. We write $I_j = (x_j,x_j+\delta_j]$.

In the previous section we defined  an orthonormal basis $\{ \Phi_{(j,k)} \}_{j \in \cJ, k \in \Z_{\geq 0} }$ for $L^2(I)$. Recall that $\Phi_{(j,k)}$ is supported on the interval $I_j^* = (x_j - \frac{1}{10} \delta_j, x_j + \frac{11}{10} \delta_j]$. Moreover, the Fourier transform of $\Phi_{(j,k)}$ is (nearly) exponentially concentrated about the frequencies $\xi = \pm \xi_{jk}$ in the sense of the bound \eqref{uniformbd}. In this section we will derive the main energy estimates on our basis.

Let $s \geq 1$ and $\delta_{\min} \in (0,1)$ be parameters, which will be determined in the next section.

We write $X = O(Y)$ to indicate the inequality $\lvert X \rvert \leq C \cdot Y$, where $C$ is a constant independent of all parameters. We write $X = O(Y)$ to indicate the inequality $\lvert X \rvert \leq C_\eta \cdot Y$, where $C$ is a constant depending only on the parameter $\eta$.

Consider the basis functions $\Phi_{(j,k)}$ (${k \in \Z_{\geq 0}}$) associated to a fixed Whitney interval $I_j \in \cW$. We partition this collection into three groups by partitioning the index set $\Z_{\geq 0}$ as follows:
\begin{align}
\label{e2}
& \mathcal{L}^{\low}_j := \{k \in \Z_{\geq 0} : \dist( (2k+1)/(4\delta_j), \R \setminus {J}) \geq s \cdot \delta_j^{-1}  \}, \\
\label{e3}
& \mathcal{L}^{\med}_j := \{  k \in \Z_{\geq 0}  : \dist( (2k+1)/(4\delta_j) , \partial {J}) <  s \cdot \delta_j^{-1} \}, \\
\label{e4}
& \mathcal{L}^{\high}_j := \{k \in \Z_{\geq 0} : \dist((2k+1)/(4\delta_j), {J}) \geq s \cdot \delta_j^{-1} \}.
\end{align}

\begin{lem}\label{local_count}
We have
\begin{itemize}
\item $  \#(\mathcal{L}^{\low}_j)  =   \delta_j + O(s)$ \quad if $\delta_j \geq s, $
\item $ \#(\mathcal{L}^{\low}_j) = 0$ \qquad\qquad\quad if $\delta_j < s, $
\item $\#(\mathcal{L}^{\med}_j) \leq 10 s$.
\end{itemize}
\end{lem}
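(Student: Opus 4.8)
The plan is to translate the three defining conditions \eqref{e2}--\eqref{e4} into explicit constraints on the integer $k$, and then count lattice points. Recall $J = [-1/2,1/2]$, so $\partial J = \{-1/2, 1/2\}$ and $\R \setminus J = (-\infty,-1/2) \cup (1/2, \infty)$. The key quantity is $\xi_{jk} = (2k+1)/(4\delta_j)$, which for $k \in \Z_{\geq 0}$ is a strictly increasing sequence of positive reals with consecutive gaps exactly $1/(2\delta_j)$, starting at $\xi_{j0} = 1/(4\delta_j) > 0$. Since all these points are positive, the distance conditions simplify: $\dist(\xi_{jk}, \R \setminus J) = \max(1/2 - \xi_{jk}, 0)$ (it is the distance to the point $1/2$ when $\xi_{jk} < 1/2$, and $0$ once $\xi_{jk} \geq 1/2$); $\dist(\xi_{jk}, \partial J) = |\xi_{jk} - 1/2|$ when $\xi_{jk} > 0$ is within $1/2$ of $1/2$, i.e. $= |\xi_{jk}-1/2|$ for $\xi_{jk} \le 1$ and $=\xi_{jk}-1/2$ in general since $\xi_{jk}>0 \ge -1/2$... more carefully $\dist(\xi_{jk},\{-1/2,1/2\}) = \min(\xi_{jk}+1/2, |\xi_{jk}-1/2|) = |\xi_{jk}-1/2|$ because $\xi_{jk} > 0$; and $\dist(\xi_{jk}, J) = \max(\xi_{jk} - 1/2, 0)$.

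For the low count: $k \in \mathcal{L}^{\low}_j$ iff $1/2 - \xi_{jk} \geq s\delta_j^{-1}$, i.e. $\xi_{jk} \leq 1/2 - s/\delta_j$, i.e. $(2k+1)/(4\delta_j) \le 1/2 - s/\delta_j$, i.e. $2k+1 \leq 2\delta_j - 4s$, i.e. $k \leq \delta_j - 2s - 1/2$. If $\delta_j < s$ the right side is negative (since $\delta_j - 2s - 1/2 < s - 2s - 1/2 < 0$), so $\mathcal{L}^{\low}_j = \emptyset$; this gives the second bullet. If $\delta_j \geq s$, the number of integers $k \geq 0$ satisfying $k \le \delta_j - 2s - 1/2$ is $\max(0, \lfloor \delta_j - 2s - 1/2\rfloor + 1)$, which is $\delta_j - 2s + O(1) = \delta_j + O(s)$ (and when the count is forced to $0$ because $\delta_j < 2s + 1/2$, we still have $\delta_j = O(s)$ so $0 = \delta_j + O(s)$ holds). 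That establishes the first bullet. For the medium count: $k \in \mathcal{L}^{\med}_j$ iff $|\xi_{jk} - 1/2| < s/\delta_j$, i.e. $\xi_{jk} \in (1/2 - s/\delta_j, 1/2 + s/\delta_j)$, an interval of length $2s/\delta_j$; since the $\xi_{jk}$ are spaced $1/(2\delta_j)$ apart, the number of them in an open interval of length $2s/\delta_j$ is at most $2s/\delta_j \cdot 2\delta_j + 1 = 4s + 1 \leq 10s$ (using $s \geq 1$). That gives the third bullet.

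I do not anticipate a serious obstacle here — the lemma is essentially bookkeeping. The one point that requires a little care is making the $O(\cdot)$ statements uniform: I should check that in the degenerate regime $s \le \delta_j < 2s + 1/2$ the claimed identity $\#(\mathcal{L}^{\low}_j) = \delta_j + O(s)$ is still literally true (it is, since both sides are $O(s)$), so that the first bullet holds for all $\delta_j \ge s$ without a side condition. I would also note explicitly that $\mathcal{L}^{\high}_j$ is not mentioned in the lemma, so no count is needed for it here (it presumably reappears in the next section). Everything reduces to counting integers in an interval, using the exact gap $1/(2\delta_j)$ between consecutive $\xi_{jk}$, so the proof is a few lines per bullet.
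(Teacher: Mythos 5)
Your proof is correct and follows essentially the same approach as the paper: interpret $\{\xi_{jk}\}_{k \ge 0}$ as an arithmetic progression of spacing $1/(2\delta_j)$ starting at $1/(4\delta_j)$, translate each distance condition into an interval constraint on $\xi_{jk}$, and count lattice points. The paper treats the second bullet slightly differently (observing that $\delta_j < s$ forces $s/\delta_j > 1$, which exceeds the maximal possible distance $1/2$ from $\R \setminus J$, rather than showing the threshold $\delta_j - 2s - 1/2$ is negative) and for the medium count considers a window near each of the two boundary points $\pm 1/2$; your simplification using $\xi_{jk} > 0$ to reduce to a single window about $1/2$ is a minor streamlining that yields the same bound.
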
 
\begin{proof}
The numbers $\xi_{jk} = (2k+1)/(4 \delta_j)$ ($k \in \N$) form an evenly-spaced grid of width  $\frac{1}{2 \delta_j}$ starting at $\xi_{j0} = \frac{1}{4 \delta_j}$. Recall that $J = [-1/2,1/2]$. Clearly there are at most $\delta_j + 1$ many indices $k\in\N$ satisfying $\xi_{jk} \in J$. Furthermore, if $k \leq \delta_j - 2 s - 1/2$ then $ \xi_{jk} = \frac{2k+1}{4 \delta_j} \leq \frac{1}{2} - \frac{s}{\delta_j}$,  and thus $\dist(\xi_{jk}, \R \setminus J) \geq s \cdot \delta_j^{-1}$. Therefore, we have
\[
\delta_j - 2s - 1/2 \leq \#(\mathcal{L}_j^{\low}) \leq \delta_j + 1.
\]
This implies the first bullet point.

If $\delta_j < s$ then $s\cdot \delta_j^{-1} > 1$. There are no points whose distance to $\R \setminus [-1/2,1/2]$ is greater than $1$. Thus, in this case $\mathcal{L}_j^{\low} = \emptyset$.

The spacing between consecutive numbers $\xi_{jk}$ is equal to $\frac{1}{2 \delta_j}$. Thus, at most $2s+1$ of the $\xi_{jk}$ lie in an interval of width $s \cdot \delta_j^{-1}$ about the boundary point $1/2 \in \partial J$. The same is true for the boundary point $-1/2 \in \partial J$. Therefore, $\#(\mathcal{L}_j^{\med}) \leq 2 \cdot (2s + 1) \leq 10s$.

\end{proof}

We partition the index set $ \Gamma = \cJ \times \Z_{\geq 0}$ into three components:
\begin{align}
\label{GammaDefn}
& \Gamma_\low = \{ (j,k) :  \delta_j \geq \delta_{\min}, \; k \in \mathcal{L}^{\low}_j\}, \\
\notag{}
& \Gamma_\med =  \{ (j,k) : \delta_j \geq \delta_{\min}, \; k \in \mathcal{L}^{\med}_j \}, \\
\notag{}
& \Gamma_\high =   \{ (j,k) :  \delta_j \geq \delta_{\min}, \; k \in \mathcal{L}^{\high}_j \}  \cup \{  (j,k) :  \delta_j < \delta_{\min}, \;  k \in \Z_{\geq 0} \}.
\end{align}


\begin{lem} \label{count_lem} For a numerical constant $C \geq 0$, we have $\#(\Gamma_{\med}) \leq C s \cdot \log ( D/ \delta_{\min})$.
\end{lem}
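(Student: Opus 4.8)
The plan is to split the sum defining $\#(\Gamma_{\med})$ over Whitney intervals, apply the per-interval bound already recorded in Lemma~\ref{local_count}, and then invoke a standard counting property of Whitney decompositions. Since
\[
\Gamma_{\med} = \bigl\{ (j,k) : \delta_j \geq \delta_{\min},\ k \in \mathcal{L}^{\med}_j \bigr\}
\]
is a disjoint union over $j \in \cJ$, and Lemma~\ref{local_count} gives $\#(\mathcal{L}^{\med}_j) \leq 10 s$ for every $j$, summing yields
\[
\#(\Gamma_{\med}) = \sum_{\substack{j \in \cJ \\ \delta_j \geq \delta_{\min}}} \#(\mathcal{L}^{\med}_j) \ \leq\ 10 s \cdot \#\bigl\{ j \in \cJ : \delta_j \geq \delta_{\min} \bigr\}.
\]
So everything reduces to the geometric estimate $\#\{ j \in \cJ : \delta_j \geq \delta_{\min}\} = O(\log(D/\delta_{\min}))$.

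For that estimate I would bucket the Whitney intervals by their dyadic length $\delta_j$ and bound the number of buckets and the contents of each separately. On the one hand, every Whitney interval satisfies $\delta_j = |I_j| \leq \dist(I_j, \partial I) \leq D/2$ by property (W2) (using $I_j \subset I = [-D/2, D/2]$); hence a Whitney interval with $\delta_j \geq \delta_{\min}$ has $\delta_j$ equal to one of at most $\log_2(D/\delta_{\min})$ admissible powers of $2$ lying in $[\delta_{\min}, D/2]$. On the other hand, at most a numerical constant number of Whitney intervals share any fixed length $\delta$: by (W2) such an interval has $\dist(I_j, \partial I) \leq 5\delta$, so it lies within distance $5\delta$ of one of the two points of $\partial I$, and near each of these points only $O(1)$ dyadic intervals of length $\delta$ can fit in a band of width $5\delta$. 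Multiplying the two bounds controls $\#\{ j : \delta_j \geq \delta_{\min}\}$ by $C' \log(D/\delta_{\min})$; combining with the display above gives the lemma with $C = 10 C'$ (absorbing the harmless factor $1/\log 2$ from converting $\log_2$ to $\log$, using $D/\delta_{\min} > 2$ so that the logarithm is bounded below away from $0$).

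I do not anticipate a serious obstacle here. The only place asking for mild care is the boundary-band count in the second step, i.e. verifying that the dyadic grid together with the comparability $|I_j| \leq \dist(I_j,\partial I) \leq 5|I_j|$ forces only $O(1)$ Whitney intervals per dyadic scale near each endpoint of $I$; this is a routine consequence of the fact that dyadic intervals of a fixed length tile $\R$ with that spacing.
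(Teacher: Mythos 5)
Your argument is essentially identical to the paper's: the paper also reduces the count to $\sum_{\delta_j \geq \delta_{\min}} \#(\mathcal{L}^{\med}_j) \leq 10 s \cdot \#\{j : \delta_j \geq \delta_{\min}\}$ via Lemma~\ref{local_count} and then invokes property~\textbf{(W2)} to bound the number of large Whitney intervals by $C\log(D/\delta_{\min})$. The only difference is that you spell out the two-step counting argument (at most $O(\log(D/\delta_{\min}))$ dyadic scales, and $O(1)$ intervals per scale by the boundary-distance comparability) which the paper leaves implicit.
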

\begin{proof}

Lemma \ref{local_count} implies that
\begin{equation*}
\#(\Gamma_{\med}) = \sum_{\substack{j \in \cJ \\ \delta_j \geq \delta_{\min}}} \# (\mathcal{L}^{\med}_j) \leq  \sum_{ \delta_j \geq \delta_{\min}} 10s.
\end{equation*}
Property \textbf{(W2)} implies that the number of Whitney intervals $I_j$ for which $\delta_j \geq \delta_{\min}$ is bounded by \\
$C \log(\diam(I)/\delta_{\min}) = C \log(D / \delta_{\min})$. Hence,
\[ \#(\Gamma_{\med}) \leq C s \cdot \log(D / \delta_{\min}),\] 
as desired.

\comments{Next, Lemma \ref{local_count} implies that
\begin{align*}
\#(\Gamma_{\low}) & = \sum_{\delta_j \geq \delta_{\min}} \# (\mathcal{L}^{\low}_j)  = \sum_{\delta_j \geq s} \#(\mathcal{L}_j^{\low}) \\
& = \sum_{ \delta_j \geq s} \left[ \delta_j  + O(s)\right] \\
& = \sum_{j \in \cJ} \delta_j -  \sum_{\delta_j < s} \delta_j  + O \biggl( \;\; \sum_{\delta_j \geq s} s \biggr).
\end{align*}
We have $\sum_{ j \in \cJ} \delta_j = \diam(I) = D$ because the Whitney intervals $\{I_j\}_{j \in \cJ}$ form a partition of $I$. Moreover, the union of the intervals $I_j$ over all $j$ with $\delta_j < s$ is contained in a neighborhood of the boundary $\partial I$ of radius $C s$, thanks to property \textbf{(W2)}. Since the Whitney intervals are pairwise disjoint, we conclude that
\begin{equation}
\label{whit}
\sum_{\delta_j < s} \delta_j \leq C s.
\end{equation}
Finally, by similar reasoning as before, note that $\sum_{\delta_j \geq s} s \leq Cs \cdot \log(D / s)$. Combining the previous estimates shows that
\[ \#(\Gamma_{\low}) = D + O( s \cdot \log(D/s)),
\]
proving the first bullet point.
}
\end{proof}

We will now prove that Fourier transform of a  basis function $\Phi_{(j,k)}$ indexed by $(j,k) \in \Gamma_\low$ or $(j,k) \in \Gamma_{\high}$ is sharply concentrated on $J$ or $\R \setminus J$, respectively.

\begin{lem}\label{error_lem}
There exist constants $C,c > 0$ determined by $\eta$ such that
\begin{align}
\label{errorbound1}
& \sum_{(j,k) \in \Gamma_{\low}}  \| \cF(\Phi_{(j,k)}) \|^2_{L^2(\R \setminus {J})}  \leq C \exp(-c \cdot s^{1 - \eta}) \cdot \log ( D /\delta_{\min}) \\
\label{errorbound2}
& \sum_{(j,k) \in \Gamma_{\high}}  \| \cF(\Phi_{(j,k)}) \|^2_{L^2( {J})}  \leq C \exp(- c \cdot s^{1 - \eta}) \cdot \log ( D /\delta_{\min}) + C \delta_{\min}.
\end{align}
\end{lem}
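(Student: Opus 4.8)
Proof proposal (plan).

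The plan is to deduce both \eqref{errorbound1} and \eqref{errorbound2} from the single pointwise Fourier bound \eqref{uniformbd}, with the main care going into the \emph{order of summation}. A single basis function $\Phi_{(j,k)}$ will contribute a stretched-exponentially small amount $\exp(-c\,s^{1-\eta})$ to the relevant $L^2$ tail, but there are up to $\sim\delta_j$ functions attached to a Whitney interval $I_j$, and $\sum_{j\in\cJ}\delta_j=D$; a crude sum would therefore cost a factor $D$ instead of the claimed $\log(D/\delta_{\min})$. To avoid this I will first sum over the frequency index $k$ for a \emph{fixed} Whitney interval, exploiting that the frequency centers $\xi_{jk}=(2k+1)/(4\delta_j)$ form an arithmetic progression of step $\tfrac1{2\delta_j}$, and only afterwards sum over $j$ — and there are only $O(\log(D/\delta_{\min}))$ Whitney intervals with $\delta_j\geq\delta_{\min}$, exactly as in the proof of Lemma \ref{count_lem}. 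This bookkeeping, together with keeping the single-function estimate dependent on $\dist(\xi_{jk},\partial J)$ rather than using a uniform per-function bound, is the delicate part; everything else is routine.

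\emph{Single-function estimates.} Fix $(j,k)\in\Gamma_{\low}$ and put $d_k=\dist(\xi_{jk},\R\setminus J)\geq s\,\delta_j^{-1}$; since $\pm\xi_{jk}$ both lie in $J$ at distance $\geq d_k$ from $\partial J$, an elementary computation gives $|\xi\mp\xi_{jk}|\geq \dist(\xi,\partial J)+d_k$ for every $\xi\in\R\setminus J$. Feeding this into \eqref{uniformbd} and using that $B_\eta$ is decreasing, then integrating over $\R\setminus J$ and substituting $u=\delta_j\xi$, yields
\[
\|\cF(\Phi_{(j,k)})\|_{L^2(\R\setminus J)}^2\;\leq\; C_\eta\int_{\delta_j d_k}^{\infty}\exp\!\bigl(-2a_\eta w^{1-\eta}\bigr)\,dw .
\]
The identical computation, now with $d_k'=\dist(\xi_{jk},J)\geq s\,\delta_j^{-1}$ in place of $d_k$, gives for $(j,k)\in\Gamma_{\high}$ with $\delta_j\geq\delta_{\min}$ the bound $\|\cF(\Phi_{(j,k)})\|_{L^2(J)}^2\leq C_\eta\int_{\delta_j d_k'}^{\infty}\exp(-2a_\eta w^{1-\eta})\,dw$; here the width-$1$ integral over $J$ is harmless once $u=\delta_j\xi$ is substituted, the apparent factor $\delta_j$ from the $\delta_j^{1/2}$ prefactor in \eqref{uniformbd} being absorbed by the $\delta_j^{-1}$ width of the frequency window. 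Throughout I use the elementary fact that $\int_v^{\infty}\exp(-2a_\eta w^{1-\eta})\,dw\leq C_\eta\exp(-c_\eta v^{1-\eta})$ for $v\geq1$ (legitimate since $1-\eta\geq\tfrac12$), which absorbs the residual polynomial factors.

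\emph{Summation over $k$, then over $j$.} For a fixed $I_j$ with $\delta_j\geq\delta_{\min}$, the numbers $\delta_j d_k$ (resp. $\delta_j d_k'$), as $k$ ranges over $\mathcal L^{\low}_j$ (resp. $\mathcal L^{\high}_j$), lie in an arithmetic progression of common difference $\tfrac12$ contained in $[s,\infty)$, so $\#\{k:\delta_j d_k\leq w\}\leq 1+2(w-s)\leq 3w$ for $w\geq s\geq1$. Hence
\[
\sum_{k\in\mathcal L^{\low}_j}\int_{\delta_j d_k}^{\infty}\exp(-2a_\eta w^{1-\eta})\,dw=\int_{s}^{\infty}\#\{k:\delta_j d_k\leq w\}\,\exp(-2a_\eta w^{1-\eta})\,dw\leq 3\int_{s}^{\infty}w\,\exp(-2a_\eta w^{1-\eta})\,dw\leq C_\eta\exp(-c_\eta s^{1-\eta}),
\]
and likewise with $\mathcal L^{\high}_j$. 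Summing over the $O(\log(D/\delta_{\min}))$ Whitney intervals with $\delta_j\geq\delta_{\min}$ gives \eqref{errorbound1} outright, and gives the contribution to \eqref{errorbound2} of all indices $(j,k)\in\Gamma_{\high}$ with $\delta_j\geq\delta_{\min}$.

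\emph{The small-interval remainder and conclusion.} It remains to bound $\sum\|\cF(\Phi_{(j,k)})\|_{L^2(J)}^2$ over the part of $\Gamma_{\high}$ with $\delta_j<\delta_{\min}$, where no frequency localization is available; here I use orthonormality directly. By property \textbf{(W2)} and \eqref{supp_cond}, every such $\Phi_{(j,k)}$ is supported in $E:=\{x\in I:\dist(x,\partial I)\leq 10\,\delta_{\min}\}$, a union of two intervals with $|E|\leq C\delta_{\min}$. Setting $g_\xi(y):=\mathbbm 1_E(y)\,e^{2\pi i y\xi}$ we have $\widehat{\Phi_{(j,k)}}(\xi)=\langle \Phi_{(j,k)},g_\xi\rangle$ and $\|g_\xi\|_{L^2}^2=|E|$, so Bessel's inequality applied to the orthonormal set $\{\Phi_{(j,k)}:\delta_j<\delta_{\min}\}\subset L^2(I)$ gives $\sum_{\delta_j<\delta_{\min}}|\widehat{\Phi_{(j,k)}}(\xi)|^2\leq|E|$ for each $\xi$; integrating over $J$ yields $\sum_{\delta_j<\delta_{\min}}\|\cF(\Phi_{(j,k)})\|_{L^2(J)}^2\leq |J|\cdot|E|\leq C\delta_{\min}$. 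Adding this to the previous estimate proves \eqref{errorbound2}. As noted above, I expect the only real obstacle to be the need to sum over $k$ first — retaining the $\dist(\xi_{jk},\partial J)$-dependence of the single-function bound — so as to trade the natural factor $D$ for $\log(D/\delta_{\min})$; the geometric count of Whitney scales and the absorption of polynomial factors into the stretched exponential are then immediate.
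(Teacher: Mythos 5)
Your proposal is correct, and the main part of your argument follows the same route as the paper: bound the single-function $L^2$-tail using \eqref{uniformbd} and the fact that $\delta_j\dist(\xi_{jk},\cdot)\geq s$ on $\mathcal{L}^{\low}_j$ (resp.\ $\mathcal{L}^{\high}_j$), sum over $k$ at fixed $j$ by exploiting the $\tfrac{1}{2\delta_j}$-spacing of the $\xi_{jk}$, and only then sum over the $O(\log(D/\delta_{\min}))$ Whitney intervals with $\delta_j\geq\delta_{\min}$. The only cosmetic difference there is that you use a layer-cake identity $\sum_k\int_{\delta_j d_k}^\infty(\cdot)=\int_s^\infty\#\{k:\delta_j d_k\leq w\}(\cdot)\,dw$ where the paper decomposes $\mathcal{L}^{\low}_j,\mathcal{L}^{\high}_j$ into dyadic shells $\mathcal{L}^{\low}_{j,\ell},\mathcal{L}^{\high}_{j,\ell}$ and counts $\#(\mathcal{L}^{\cdot}_{j,\ell})\leq 10s2^\ell$; these are the same estimate in two languages. (Also, the refinement $|\xi\mp\xi_{jk}|\geq\dist(\xi,\partial J)+d_k$ is harmless but not actually needed: $|\xi-\xi_{jk}|\geq d_k$ already suffices once you enlarge the domain of integration to $\{|v|\geq\delta_j d_k\}$.)

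Where you genuinely depart from the paper is the $\delta_j<\delta_{\min}$ tail of \eqref{errorbound2}. The paper stays on the frequency side: using \eqref{uniformbd} and a Riemann-sum comparison it shows $\sum_{k\geq 0}\|\cF\Phi_{(j,k)}\|_{L^2(J)}^2\leq C\delta_j$ for each small Whitney interval, and then sums $\sum_{\delta_j<\delta_{\min}}\delta_j\leq C\delta_{\min}$ via the Whitney geometry \textbf{(W1)}--\textbf{(W2)}. You instead argue on the spatial side: by \textbf{(W2)} all the small Whitney cutoffs are supported in a boundary collar $E$ with $|E|\lesssim\delta_{\min}$, and Bessel's inequality applied to $g_\xi=\mathbbm 1_E\,e^{2\pi i(\cdot)\xi}$ gives the pointwise bound $\sum|\widehat{\Phi_{(j,k)}}(\xi)|^2\leq\|g_\xi\|^2=|E|$, which integrates over $J$ to $|J|\,|E|\lesssim\delta_{\min}$. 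Both arguments are short and both ultimately use the Whitney geometry; yours additionally uses orthonormality (which the paper's Riemann-sum argument does not), but in exchange avoids any analysis of the $B_\eta$-decay for those small intervals and is arguably cleaner. Both are fine, and your version is a legitimate alternative to record.
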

\begin{proof}

Recall our notation: $\xi_{jk} = \frac{2k+1}{4 \delta_j}$ for $(j,k) \in \Gamma = \cJ \times \Z_{\geq 0}$.

For $j \in \cJ$ with $\delta_j \geq \delta_{\min}$, we define
\begin{align}
\label{low_piece}
\mathcal{L}^{\low}_{j,\ell} &:= \{  k \in \Z_{\geq 0} : \dist(\xi_{jk}, \R \setminus J) \in [s \cdot 2^\ell/\delta_j, s\cdot 2^{\ell+1} /\delta_j )\},  \quad \mbox{and} \\
\label{high_piece}
\mathcal{L}^{\high}_{j,\ell} &:= \{ k \in \Z_{\geq 0} : \dist( \xi_{jk} , {J}) \in [s \cdot 2^\ell/ \delta_j , s \cdot 2^{\ell+1} / \delta_j) \} \qquad \mbox{for} \; \ell  \in \Z_{\geq 0}.
\end{align}
Note that $\mathcal{L}^{\low}_{j} = \bigcup_{\ell \geq 0} \mathcal{L}^{\low}_{j,\ell}$ and $\mathcal{L}^{\high}_{j} = \bigcup_{\ell \geq 0} \mathcal{L}^{\high}_{j,\ell}$; see \eqref{e2}-\eqref{e4}. 

The spacing between $\xi_{jk}$ and $\xi_{jk'}$ for distinct $k,k' \in \Z_{\geq 0}$ is at least $\frac{1}{2 \delta_j}$. Thus, a counting argument shows that
\begin{equation}
\label{piece_bd1} \#(\mathcal{L}^{\high}_{j,\ell}) \leq  10 s \cdot 2^\ell
\end{equation}
and
\begin{equation}
\label{piece_bd2}
\#(\mathcal{L}^{\low}_{j,\ell}) \leq 10 s \cdot 2^\ell.
\end{equation}

From \eqref{uniformbd}, for any $k \in \Z_{\geq 0}$ we have
\begin{equation*}
\| \cF( \Phi_{(j,k)} ) \|_{L^2(\R \setminus {J})}^2 \leq C \int_{\R \setminus {J}} \delta_j \cdot \bigl[ B_\eta(\delta_j \cdot ( \xi - \xi_{jk})) + B_\eta(\delta_j \cdot ( \xi + \xi_{jk})) \bigr]^2 d \xi.
\end{equation*}
Since $\R \setminus J = (-\infty,1) \cup (1,\infty)$ is symmetric about the origin, we deduce that
\begin{align*}
\| \cF( \Phi_{(j,k)} ) \|_{L^2(\R \setminus {J})}^2 & \leq  C  \int_{\R \setminus J} \delta_j \cdot B_\eta(\delta_j \cdot (\xi - \xi_{jk}))^2 d \xi \\
& \leq C \int_{\lvert \xi' \rvert \geq \delta_j \cdot \dist(\xi_{jk}, \R \setminus {J})} B_\eta( \xi' )^2  d \xi' \notag{}
\end{align*}
where the second inequality relies on the change of variable $\xi' = \delta_j \cdot ( \xi - \xi_{jk})$; note that $\xi \in \R \setminus J \implies |\xi'| \geq \delta_j \cdot \dist(\xi_{jk}, \R \setminus J)$. Thus, by the definition $B_\eta(\xi) = A_\eta \cdot \exp(- a_\eta \cdot | \xi |^{1- \eta})$ we conclude that
\begin{equation}
\label{b1.4}
\| \cF( \Phi_{(j,k)} ) \|_{L^2(\R \setminus {J})}^2 \leq C \exp\left( - c \cdot \left[ \delta_j \cdot \dist(\xi_{jk},\R \setminus {J}) \right]^{1- \eta} \right)
\end{equation}
for constants $c, C > 0$ that depend only on $\eta$. For $k \in \mathcal{L}_{j,\ell}^\low$ we have $\dist(\xi_{jk},\R \setminus J) \sim s \cdot 2^\ell / \delta_j$, where we write $A \sim B$ to mean that $c A \leq B \leq C A$ for some constants $c,C$. Thus, \eqref{b1.4} implies that
\begin{equation}
\label{b1.4new}
\| \cF( \Phi_{(j,k)} ) \|_{L^2(\R \setminus {J})}^2 \leq C \exp\left( - c \cdot \left[ s \cdot 2^\ell \right]^{1- \eta} \right) \;\;\; \mbox{for all} \; k \in \mathcal{L}_{j,\ell}^\low.
\end{equation}

The method used to prove \eqref{b1.4} also shows that
\begin{equation}
\label{b1.5}
\| \cF(\Phi_{(j,k)}) \|_{L^2(J)}^2 \leq C \exp\left( - c \cdot \left[ \delta_j \cdot \dist(\xi_{jk}, {J}) \right]^{1- \eta} \right)
\end{equation}
for constants $c, C > 0$ depending only on $\eta$. For $k \in \mathcal{L}_{j,\ell}^\high$ we have $\dist(\xi_{jk}, J) \sim s \cdot 2^\ell / \delta_j$. Thus, \eqref{b1.5} implies that
\begin{equation}
\label{b1.5new}
\| \cF( \Phi_{(j,k)} ) \|_{L^2(J)}^2 \leq C \exp\left( - c \cdot \left[ s \cdot 2^\ell \right]^{1- \eta} \right) \;\;\; \mbox{for all} \; k \in \mathcal{L}_{j,\ell}^\high.
\end{equation}

We write $\mathcal{L}^{\low}_{j} = \bigcup_{\ell \geq 0} \mathcal{L}^{\low}_{j,\ell}$. Applying \eqref{piece_bd2} and \eqref{b1.4new}, we learn that
\begin{align}
\label{b1} 
\sum_{j : \delta_j \geq \delta_{\min}} \sum_{k \in \mathcal{L}^{\low}_j} \| \cF(\Phi_{(j,k)}) \|^2_{L^2(\R \setminus {J})} &= \sum_{j: \delta_j  \geq \delta_{\min}} \sum_{\ell=0}^\infty \sum_{k \in \mathcal{L}^{\low}_{j,\ell}} \| \cF(\Phi_{(j,k)}) \|^2_{L^2(\R \setminus {J})}\\
\notag{}
& \leq \sum_{j : \delta_j \geq \delta_{\min}} \sum_{\ell=0}^\infty 10 s 2^\ell \cdot C \exp(-c \cdot [s 2^\ell]^{1 - \eta}) \\
\notag{}
& \leq \sum_{ j : \delta_j \geq \delta_{\min}}  C \cdot \exp(-c \cdot s^{1 - \eta}) \\
\notag{}
& \leq C \exp(-c \cdot s^{1 - \eta}) \log(\diam(I)/\delta_{\min}).
\notag{}
\end{align}
In view of the definition of $\Gamma_{\low}$ in \eqref{GammaDefn}, this completes the proof of \eqref{errorbound1}.

Next we prove \eqref{errorbound2}. We write $\mathcal{L}^{\high}_{j} = \bigcup_{\ell \geq 0} \mathcal{L}^{\high}_{j,\ell}$. From \eqref{piece_bd1} and \eqref{b1.5new} we have
\begin{align}
\label{b2}
\sum_{j: \delta_j \geq \delta_{\min}} \sum_{k \in \mathcal{L}^{\high}_{j}} \| \cF(\Phi_{(j,k)}) \|^2_{L^2({J})} &= \sum_{ j : \delta_j \geq \delta_{\min}} \sum_{\ell=0}^\infty \sum_{k \in \mathcal{L}^{\high}_{j,\ell}} \|\cF(\Phi_{(j,k)})\|^2_{L^2({J})}\\
\notag{}
& \leq \sum_{ j  \delta_j \geq \delta_{\min}} \sum_{\ell =0}^\infty 10 s2^\ell \cdot C \exp(-c\cdot [s 2^\ell]^{1 - \eta}) \\
\notag{}
& \leq \sum_{ j : \delta_j \geq \delta_{\min}}  C \cdot \exp(-c \cdot s^{1 - \eta}) \\
\notag{}
& \leq C \exp(-c \cdot s^{1 - \eta}) \log(\diam(I)/\delta_{\min}).
\end{align}

Alternatively, suppose that $j \in \cJ$ is such that $\delta_j < \delta_{\min}$. Then \eqref{uniformbd} implies that
\begin{equation}\label{eq1}
\sum_{k \geq 0} \| \cF(\Phi_{(j,k)})\|^2_{L^2({J})} \leq \sum_{k \geq 0} \int_{{J}} \delta_j \cdot \bigl[ B_\eta(\delta_j \cdot (\xi - \xi_{jk})) +  B_\eta(\delta_j \cdot (\xi + \xi_{jk}))  \bigr]^2 d\xi.
\end{equation}
Switching the order of summation and integration in \eqref{eq1} and using the fact that the interval $J = [-1/2,1/2]$ is symmetric about the origin, we have
\begin{equation}\label{eq2}
\sum_{k \geq 0} \| \cF(\Phi_{(j,k)})\|^2_{L^2({J})} \leq  C \int_{{J}} \sum_{k \geq 0} \delta_j \cdot \bigl[ B_\eta(\delta_j \cdot (\xi - \xi_{jk})) \bigr]^2 d\xi.
\end{equation}
Since $B_\eta(\xi)$ is smooth, bounded, and rapidly decaying as $\xi \rightarrow \infty$, we can compare a Riemann sum with an integral to prove the estimate
\[\sum_{k \geq 0 } \bigl[ B_\eta( \delta_j \cdot ( \xi -\xi_{jk} )) \bigr]^2 \leq  C \int_{\R} B_\eta(z)^2 dz  \leq C \quad \mbox{uniformly for all} \; \xi \in \R.\]
Therefore, from \eqref{eq2} we have
\[
\sum_{k \geq 0 } \| \cF(\Phi_{(j,k)} \|_{L^2({J})}^2 \leq C \delta_j \cdot \lvert {J} \rvert = C \delta_j.
\]
By summing over all $j \in \cJ$ with $\delta_j < \delta_{\min}$, we conclude that
\begin{equation}
\label{b3}
\sum_{ j:  \delta_j < \delta_{\min}} \sum_{k \geq 0} \| \cF(\Phi_{(j,k)} \|_{L^2({J})}^2  \leq C \sum_{j : \delta_j < \delta_{\min}} \delta_j  \leq C \delta_{\min},
\end{equation}
where the last estimate is a consequence of the Whitney conditions \textbf{(W1)} and \textbf{(W2)} (see Section \ref{whit}).

In view of the definition of $\Gamma_{\high}$ in \eqref{GammaDefn}, we see that \eqref{b2} and \eqref{b3} imply the estimate \eqref{errorbound2}, finishing the proof of the lemma.
\end{proof}

\section{Proof of Theorem \ref{thm1}}
\label{sec4}

In the previous section we defined an orthonormal basis $\{ \Phi_{(j,k)}\}_{(j,k) \in \Gamma}$ for $L^2(I)$, depending on a parameter $\eta \in (0,1/2]$.

Let $\epsilon \in (0,1/2)$. Let $s \geq 1$ and $\delta_{\min} \in (0,1)$ be parameters. We will choose $s$ and $\delta_{\min}$ in the following paragraphs.

In the previous section we defined a partition of $\Gamma$ as $\Gamma_\low \cup \Gamma_\med \cup \Gamma_\high$ in terms of the parameters $s$ and $\delta_{\min}$; see \eqref{e2}-\eqref{e4} and \eqref{GammaDefn}.

For all $f \in L^2(I)$, Plancharel's theorem implies that
\begin{align*}
\| T f \|_{L^2(I)}^2 = \| R_I P_J R_I f \|_{L^2(I)}^2 &= \| R_I P_J f \|_{L^2(I)}^2 \\
&\leq \| P_J f \|_{L^2(\R)}^2 \\
&= \int_{J} \lvert \widehat{f}(\xi) \rvert^2 d \xi.
\end{align*}
Similarly,
\begin{align*}
\| f - T f \|_{L^2(I)}^2 = \| f - R_I P_J R_I f \|_{L^2(I)}^2 &= \| R_I f - R_I P_J f \|_{L^2(I)}^2 \\
&\leq \| (I - P_J) f \|_{L^2(\R)}^2 \\
&= \int_{\R \setminus J} \lvert \widehat{f}(\xi) \rvert^2 d \xi.
\end{align*}
Thus
\begin{align}
\label{et1}
\sum_{(j,k) \in \Gamma_\high} \| T \Phi_{(j,k)}\|_{L^2(I)}^2 \;\;+& \sum_{(j,k) \in \Gamma_\low} \| T \Phi_{(j,k)} - \Phi_{(j,k)} \|_{L^2(I)}^2 \\
\notag{}
& \leq \sum_{(j,k) \in \Gamma_\high} \| \cF(\Phi_{(j,k)}) \|_{L^2(J)}^2 + \sum_{(j,k) \in \Gamma_\low} \| \cF(\Phi_{(j,k)})\|_{L^2(\R \setminus J)}^2 \\
\notag{}
& \oleq{\text{Lemma \ref{error_lem}}}  C_\eta  \cdot\exp(- c_\eta \cdot s^{1- \eta}) \cdot \log(D / \delta_{\min}) + C_\eta \cdot \delta_{\min},
\end{align}
where $C_\eta, c_\eta > 0$ are constants determined only by $\eta$.

We are ready, at last, to state our assumptions on $s$ and $\delta_{\min}$. We take
\begin{equation}
\label{set_param}
\left\{
\begin{aligned}
&\delta_{\min} := \frac{\epsilon^3 }{2 C_\eta}, \;\; \mbox{and} \\
&C_\eta \cdot \exp( - c_\eta  \cdot s^{1 - \eta}) \cdot \log(D/\delta_{\min}) \leq \frac{1}{2} \epsilon^3.
\end{aligned}
\right.
\end{equation}
The second estimate  is equivalent to 
\[
s \geq \left( \frac{1}{c_\eta} \log\left( \frac{2 C_\eta \log(D/\delta_{\min})}{\epsilon^3} \right) \right)^{1/(1-\eta)}
\]
Elementary algebra shows that it suffices to take
\begin{equation}
\label{s_defn}
s  := A_\eta \cdot \left( \log \left( \log (D) \cdot \epsilon^{-1} \right) \right)^{1/(1-\eta)},
\end{equation}
for a constant $A_\eta$ determined only by $\eta$. Now using \eqref{set_param} in \eqref{et1} we see that
\[
\sum_{(j,k) \in \Gamma_\high} \| T \Phi_{(j,k)}\|_{L^2(I)}^2 + \sum_{(j,k) \in \Gamma_\low} \| T \Phi_{(j,k)} - \Phi_{(j,k)} \|_{L^2(I)}^2 \leq \epsilon^3.
\]
Recall that $\Gamma$ is equal to the disjoint union $\Gamma_{\low} \cup \Gamma_{\med} \cup \Gamma_{\high}$. Thus, according to  Lemma  \ref{lem1}, if we let $\lambda_k$ ($k \in \N$) denote the positive eigenvalues of $T$ (arranged in non-increasing order), and if we define $ M_\epsilon$ to be the number of eigenvalues of $T$ in the interval $(\epsilon,1-\epsilon)$, then we have
\begin{align*}
M_\epsilon \leq 2 \cdot \# (\Gamma_\med) &\oleq{\text{Lemma \ref{count_lem}}}  C s \cdot \log(D/\delta_{\min}) \\
&= C A_\eta \cdot \left( \log \left( \log (D)  \cdot \epsilon^{-1} \right) \right)^{1/(1-\eta)}  \cdot \log(2 C_\eta D \cdot \epsilon^{-3}) \\
&\leq A_\eta' \cdot \left( \log \left( \log (D)  \cdot \epsilon^{-1} \right) \right)^{1+ 2 \eta}  \cdot \log(D \cdot \epsilon^{-1}),
\end{align*}
for a constant $A_\eta$ determined only by $\eta$. We apply \eqref{pos_mid} and conclude that $\left\{ k : \lambda_k  \in (\epsilon,1-\epsilon) \right\} \subset [ D - 2M_\epsilon , D + 2M_\epsilon]$. According to our upper bound on $M_\epsilon$, this yields
\[
\left\{ k : \lambda_k  \in (\epsilon,1-\epsilon) \right\} \subset [ D - K, D + K],
\]
where
\[
K = A_\eta'' \cdot \left( \log \left( \log (D)  \cdot \epsilon^{-1} \right) \right)^{1 + 2 \eta}  \cdot \log(D \cdot \epsilon^{-1})
\]
 for a constant $A_\eta''$ determined only by $\eta$.
This completes the proof of Theorem \ref{thm1}.
\hfill \qed

\comments{
\begin{proof}[Proof of Theorem \ref{thm1}]

We use the variational approach for estimating the eigenvalues of $T : L^2(I) \rightarrow L^2(I)$. Recall that
\[\lambda_k = \sup_{\substack{ V \subset L^2(I) \\ \dim V = k } } \min_{ \substack{ \psi \in V \\ \| \psi \| = 1 }} \langle T \psi, \psi \rangle .\]

Consider the $N$-dimensional subspace $\overline{V} = \spa\{ \Phi_1,\cdots,\Phi_{N}  \} \subset L^2(J)$. An arbitrary $\psi \in \overline{V}$ with $\| \psi \| = 1$ takes the form
\[ \psi = \sum_{k=1}^{N} c_k \Phi_k, \quad \mbox{where} \; \sum_{k=1}^{N} \lvert c_k \rvert^2 = 1.\] 
From Lemma \ref{main_lem}, we deduce that
\begin{equation}
\label{lbound}
\lambda_{N} \geq \min_{\substack{ \psi \in \overline{V} \\  \| \psi \| = 1} } \langle T \psi, \psi \rangle \geq 1 - C\epsilon.
\end{equation}

At the same time, consider an arbitrary subspace $V \subset L^2(I)$ of dimension $N+M + 1$. Then there exists $\varphi \in V$ such that
\[\varphi = \sum_{k=N+M+1}^\infty c_k \varphi_k,  \quad \mbox{where} \; \sum_{k=N+M+1}^\infty \lvert c_k \rvert^2 = 1.\]
But Lemma \ref{main_lem} shows that  $\langle T \varphi, \varphi \rangle \leq C \epsilon$. Hence,
\begin{equation}
\label{ubound}
\lambda_{N+M+1} = \sup_{\substack{ V \subset L^2(I) \\ \dim(V) = N+M +1 }} \inf_{\substack{\varphi \in V \\ \| \varphi \| = 1 }} \langle T \varphi, \varphi \rangle \leq C \epsilon.
\end{equation}
This complete the proof of Theorem \ref{thm1}.

\end{proof}

}

\end{document}